\documentclass[10pt]{amsart}
\usepackage{amsrefs}
\usepackage{amssymb}
\usepackage[all]{xy}

\usepackage{hyperref}

\usepackage{tikz}
\usetikzlibrary{matrix,arrows}

\DeclareMathOperator{\Gal}{Gal}

\DeclareMathOperator{\Img}{Im}

\DeclareMathOperator{\Ker}{Ker}

\addtolength{\textwidth}{20pt}
\addtolength{\evensidemargin}{-10pt}
\addtolength{\oddsidemargin}{-10pt}
\addtolength{\textheight}{15pt}


\raggedbottom

\DeclareFontFamily{U}{wncy}{}
\DeclareFontShape{U}{wncy}{m}{n}{<->wncyr10}{}
\DeclareSymbolFont{mcy}{U}{wncy}{m}{n}
\DeclareMathSymbol{\Sha}{\mathord}{mcy}{"58}
\DeclareMathSymbol{\sha}{\mathord}{mcy}{"78}

\begin{document}

\newtheorem{thm}{Theorem}[section]
\newtheorem{cor}[thm]{Corollary}
\newtheorem{lem}[thm]{Lemma}
\newtheorem{fact}[thm]{Fact}
\newtheorem{prop}[thm]{Proposition}
\newtheorem{defin}[thm]{Definition}
\newtheorem{exam}[thm]{Example}
\newtheorem{examples}[thm]{Examples}
\newtheorem{rem}[thm]{Remark}
\newtheorem{case}{\sl Case}
\newtheorem{claim}{Claim}
\newtheorem{question}[thm]{Question}
\newtheorem{conj}[thm]{Conjecture}
\newtheorem*{notation}{Notation}
\swapnumbers
\newtheorem{rems}[thm]{Remarks}
\newtheorem*{acknowledgment}{Acknowledgements}
\newtheorem*{thmno}{Theorem}

\newtheorem{questions}[thm]{Questions}
\numberwithin{equation}{section}

\newcommand{\gr}{\mathrm{gr}}
\newcommand{\inv}{^{-1}}
\newcommand{\isom}{\cong}
\newcommand{\dbC}{\mathbb{C}}
\newcommand{\F}{\mathbb{F}}
\newcommand{\dbN}{\mathbb{N}}
\newcommand{\Q}{\mathbb{Q}}
\newcommand{\dbR}{\mathbb{R}}
\newcommand{\dbU}{\mathbb{U}}
\newcommand{\Z}{\mathbb{Z}}
\newcommand{\calG}{\mathcal{G}}
\newcommand{\K}{\mathbb{K}}
\newcommand{\rmH}{\mathrm{H}}
\newcommand{\bfH}{\mathbf{H}}
\newcommand{\rmr}{\mathrm{r}}
\newcommand{\Span}{\mathrm{Span}}
\newcommand{\eue}{\mathbf{e}}


\newcommand{\hac}{\hat c}
\newcommand{\hatheta}{\hat\theta}

\title[Massey products and the elementary type conjecture]{Massey products in Galois cohomology \\ and the elementary type conjecture}

\author{Claudio Quadrelli}
\address{Department of Science \& High-Tech, University of Insubria, Como, Italy EU}
\email{claudio.quadrelli@uninsubria.it}
\date{\today}

\begin{abstract}
 Let $p$ be a prime. 
 We prove that a positive solution to Efrat's Elementary Type Conjecture implies a positive solution to a strengthened version of Mina\v{c}--T\^an's Massey Vanishing Conjecture in the case of finitely generated maximal pro-$p$ Galois groups whose pro-$p$ cyclotomic character has torsion-free image.
 Consequently, the maximal pro-$p$ Galois group of a field $\K$ containing a root of 1 of order $p$ (and also $\sqrt{-1}$ if $p=2$) satisfies the strong $n$-Massey vanishing property for every $n>2$ (which is equivalent to the cup-defining $n$-Massey product property for every $n>2$, as defined by Mina\v{c}--T\^an) in several relevant cases.
\end{abstract}

\subjclass[2010]{Primary 12G05; Secondary 20E18, 20J06, 12F10}

\keywords{Galois cohomology, Massey products, absolute Galois groups, elementary type conjecture}

\maketitle

\section{Introduction}
\label{sec:intro}

Throughout the paper, $p$ will denote a prime number.
Given a field $\K$, let $\bar\K_s$ denote a separable closure of $\K$, and let $\K(p)$ denote the maximal pro-$p$-extension of $\K$ inside $\bar\K_s$.
The {\sl absolute Galois group} $G_{\K}=\Gal(\bar\K_s/\K)$ is a profinite group, and the Galois group 
$$G_{\K}(p):=\Gal(\K(p)/\K)=\varprojlim_{[\mathbb{L}:\K]=p^k}\Gal(\mathbb{L}/\K),$$
called the {\sl maximal pro-$p$ Galois group} of $\K$, is the maximal pro-$p$ quotient of $G_{\K}$.
A major difficult problem in Galois theory is the characterization of profinite groups which occur as absolute Galois groups of fields, and of pro-$p$ groups which occur as maximal pro-$p$ Galois groups (see, e.g., \cite[\S~3.12]{WDG} and \cite[\S~2.2]{birs}).
Observe that if a pro-$p$ group $G$ does not occur as the maximal pro-$p$ Galois group of a field containing a root of 1 of order $p$, then it does not occur as the absolute Galois group of any field.

In the '90s, I.~Efrat formulated a conjecture --- the {\sl Elementary Type Conjecture} on maximal pro-$p$ Galois groups, see \cite{ido:etc} --- which proposes a description of finitely generated pro-$p$ groups which occur as maximal pro-$p$ Galois groups containing a root of 1 of order $p$: it predicts that if $\K$ is a field containing a root of 1 of order $p$ with $G_{\K}(p)$ finitely generated, then $G_{\K}(p)$ may be constructed starting from free pro-$p$ groups and Demushkin pro-$p$ groups and iterating free pro-$p$ products and certain semidirect products with $\Z_p$ (see also \cite[\S~10]{marshall} and \cite[\S~7.5]{qw:cyc}). 
The pro-$p$ groups which are constructible in this way are called {\sl pro-$p$ groups of elementary type} (see Definition~\ref{defin:ET} below).
The Elementary Type Conjecture is verified, for example, if
$\K$ is 
an extension of relative transcendence degree 1 of a pseudo algebraically closed field (see \cite[Ch.~11]{friedjarden} and \cite[\S~5]{ido:Hasse}); or if $\K$ is an algebraic extension of a global field of characteristic not $p$  (see \cite{ido:etc2}).
Moreover, in \cite{sz:raags} I.~Snopce and P.A.~Zalesski{\u\i} provided new evidence in support of the Elementary Type Conjecture, as they proved that within the family of {\sl right-angled Artin pro-$p$ groups} --- which is an extremely rich family of pro-$p$ groups ---, the only members which occur as maximal pro-$p$ Galois groups (and thus as absolute Galois groups) are of elementary type.

The proof of the celebrated {\sl Bloch-Kato conjecture} --- now called {\sl Norm Residue Theorem} --- by M.~Rost and V.~Voevodsky, with the so-called ``Weibel's patch'' (see \cite{rost,voev,weibel,HW:book}) provided new insights in the study of maximal pro-$p$ Galois groups and absolute Galois groups of fields (see, e.g., \cite{cem,idojan:tams,MPQT} and references therein).
Indeed, the Norm Residue Theorem implies that the ring structure of the $\F_p$-cohomology algebra
\[
 \bfH^\bullet(G_{\K}(p))=\coprod_{n\geq0}\rmH^n(G_{\K}(p),\F_p)
\]
of a field $\K$ containing a root of 1 of order $p$, endowed with the graded-commutative {\sl cup-product} 
$$\textvisiblespace\smallsmile\textvisiblespace\colon \rmH^s(G,\F_p)\times\rmH^t(G,\F_p)\longrightarrow\rmH^{s+t}(G,\F_p),
\qquad s,t\geq0,$$
is determined by degrees 1 and 2 (see, e.g., \cite[\S~1]{cq:bk}) --- 
observe that this is true also for every closed subgroup of $G_{\K}(p)$, as every closed subgroup is again a maximal pro-$p$ Galois group.
It is worth underlining that all pro-$p$ groups of elementary type satisfy hereditarily this cohomological condition (i.e., 
the ring structure of the $\F_p$-cohomology algebra of every closed subgroup is determined by degrees 1 and 2, see \cite[Thm.~1.4]{qw:cyc}); on the other hand, it is remarkable that we do not know examples of finitely generated pro-$p$ groups satisfying hereditarily this cohomological condition other than pro-$p$ groups of elementary type.

In recent years --- especially after the publication of the work of M.~Hopkins and K.~Wickelgren \cite{hopwick} ---, much of the research on absolute Galois groups and maximal pro-$p$ Galois groups focused on the study of {\sl Massey products} in Galois cohomology (see, e.g., \cite{vogel,ido:Massey,mt:Massey,birs} and references therein).
Given a pro-$p$ group $G$ and an integer $n\geq2$, the {\sl $n$-fold Massey product} is a multi-valued map which associates a 
sequence $\alpha_1,\ldots,\alpha_n$ of elements of $\rmH^1(G,\Z/p)$ to a 
(possibly empty) subset $$\langle\alpha_1,\ldots,\alpha_n\rangle\subseteq \rmH^2(G,\F_p).$$ 
If $n=2$ it coincides with the cup-product, namely, $\langle\alpha_1,\alpha_2\rangle=\{\alpha_1\smallsmile\alpha_2\}$.
For $n>2$, a pro-$p$ group $G$ is said to satisfy the {\sl $n$-Massey vanishing property} if the set 
$\langle\alpha_1,\ldots,\alpha_n\rangle$ contains 0 whenever it is non-empty.
In \cite{mt:conj}, J.~Mina\v{c} and N.D.~T\^an conjectured the following.

\begin{conj}\label{conj:MvanMT}
 Let $\K$ be a field containing a root of 1 of order $p$.
 Then the maximal pro-$p$ Galois group $G_{\K}(p)$ of $\K$ satisfies the $n$-Massey vanishing property for every $n>2$.
\end{conj}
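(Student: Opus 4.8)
The plan is to recast the $n$-Massey vanishing property as a lifting problem for unipotent representations, and then to attack that lifting problem by induction on $n$. Write $G=G_\K(p)$, let $\dbU_{n+1}=\dbU_{n+1}(\F_p)$ be the group of upper unitriangular $(n+1)\times(n+1)$ matrices over $\F_p$, let $\Zen\isom\F_p$ be its centre (the top-right corner), and set $\bar{\dbU}_{n+1}=\dbU_{n+1}/\Zen$. By Dwyer's correspondence between defining systems and unipotent representations, $\langle\alpha_1,\ldots,\alpha_n\rangle$ is non-empty exactly when there is a continuous homomorphism $\bar\rho\colon G\to\bar{\dbU}_{n+1}$ with superdiagonal entries $\alpha_1,\ldots,\alpha_n$, and $0\in\langle\alpha_1,\ldots,\alpha_n\rangle$ exactly when some such $\bar\rho$ lifts to $\rho\colon G\to\dbU_{n+1}$. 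So the $n$-Massey vanishing property for $G$ is equivalent to the assertion that \emph{every} $\bar\rho\colon G\to\bar{\dbU}_{n+1}$ lifts along $\dbU_{n+1}\twoheadrightarrow\bar{\dbU}_{n+1}$. Since $G$ is pro-$p$ and $\bar{\dbU}_{n+1}$ is a finite $p$-group, $\bar\rho$ has finite image, and as $\Zen$ is central the only obstruction to a lift is the pullback $\bar\rho^*(c)\in\rmH^2(G,\F_p)$ of the class $c$ of the central extension $1\to\Zen\to\dbU_{n+1}\to\bar{\dbU}_{n+1}\to1$; proving that this class vanishes is the entire content.

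For the inductive step I would use the two block projections $\pi_1,\pi_2\colon\bar{\dbU}_{n+1}\to\dbU_n$ obtained by deleting the last, respectively the first, row-and-column; both annihilate $\Zen$, so $\bar\rho_1=\pi_1\circ\bar\rho$ and $\bar\rho_2=\pi_2\circ\bar\rho$ are honest homomorphisms $G\to\dbU_n$, i.e.\ full lifts of the two overlapping $(n-1)$-fold defining subsystems. The $(n-1)$-Massey vanishing hypothesis --- better, its strong form, which I would carry through the induction --- supplies the freedom to modify these sub-lifts, and the residual obstruction to gluing them across the common $\dbU_{n-1}$-block into a single $\dbU_{n+1}$-valued lift is the class $\bar\rho^*(c)\in\rmH^2(G,\F_p)$. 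Invoking the Norm Residue Theorem, which forces $\bfH^\bullet(G)$ to be quadratic, I would rewrite $\bar\rho^*(c)$ as an explicit $\F_p$-combination of cup products $\alpha_i\smallsmile\beta$ of degree-1 classes coming from the defining data, reducing the whole conjecture to the vanishing of this one degree-2 class.

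The mechanism for that final vanishing is a \emph{splitting variety}: to $\bar\rho$ one attaches a smooth, geometrically rational $\K$-variety $X_{\bar\rho}$, a torsor under a unipotent $\K$-group, whose $\K$-points are precisely the lifts $\rho$. Over $\bar\K_s$ the torsor splits, so $X_{\bar\rho}$ has points everywhere locally in a suitable sense, and the problem becomes the production of a genuine $\K$-point. \textbf{This is the step I expect to be the main obstacle.} When $\K$ is local, or has cohomological dimension at most $2$, the point is immediate; when $\K$ is global one can produce it by checking that the Brauer--Manin obstruction to the Hasse principle vanishes on $X_{\bar\rho}$. Over an arbitrary field no such local-global input is available, and there I would instead fall back on structure theory: reduce to the case that $G_\K(p)$ is finitely generated, invoke the Elementary Type Conjecture to build it from free, Demushkin, and admissible $\Z_p$-semidirect-product factors, check the lifting property on each factor directly, and show it is inherited through free pro-$p$ products and the admissible semidirect products --- the hereditary quadratic control of the Norm Residue Theorem, valid on every closed subgroup of $G$ (each itself a maximal pro-$p$ Galois group), being exactly what keeps this induction alive. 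The two points I do not expect to clear routinely are the reduction of an arbitrary field to the finitely generated case and the elimination of the dependence on the Elementary Type Conjecture; fusing the arithmetic and structural routes into a single unconditional argument covering all fields is the true crux of the problem.
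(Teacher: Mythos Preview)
The statement you are trying to prove is Conjecture~\ref{conj:MvanMT}, the Mina\v{c}--T\^an Massey Vanishing Conjecture. The paper does \emph{not} prove it; it is stated as an open conjecture, and the paper's actual contribution (Theorem~\ref{thm:main}) is the strong $n$-Massey vanishing property for pro-$p$ groups of elementary type with torsion-free orientation. So there is no ``paper's own proof'' to compare your proposal against, and your proposal is not a proof either --- as you yourself acknowledge in the last paragraph.

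Your outline is an honest inventory of known strategies, but several of your intermediate steps are not merely technical. The inductive scheme via the block projections $\pi_1,\pi_2$ and ``gluing'' does not reduce the obstruction $\bar\rho^*(c)$ to something automatically killed by quadraticity: the Norm Residue Theorem tells you $\rmH^2(G,\F_p)$ is spanned by cup products, but it gives no mechanism to express the \emph{specific} class $\bar\rho^*(c)$ as a combination of cup products you can then correct away by modifying the defining system; indeed, the conjecture is open precisely because this does not follow from quadraticity alone. The splitting-variety route is genuine but, as you note, currently works only for local and global fields (items~(b)--(d) in the paper's introduction), not for arbitrary $\K$.

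Your ``fall back'' plan --- reduce to the finitely generated case, invoke the Elementary Type Conjecture, check free and Demushkin groups directly, and show closure under free products and admissible semidirect products --- is exactly what the paper does in \S\ref{ssec:proof}, and the paper is explicit that this yields only a \emph{conditional} result. Two further points: the paper needs the orientation to be torsion-free (equivalently $\sqrt{-1}\in\K$ if $p=2$), a hypothesis your outline omits but which is essential for the Kummerian machinery behind the semidirect-product step; and the reduction from arbitrary $\K$ to the finitely generated case is not available in general --- Wittenberg's example and the Merkurjev--Scavia result cited in the introduction show that the \emph{strong} form can fail for $p=2$ over fields with infinitely generated $G_\K(2)$, so any such reduction would have to be delicate.
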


One has the following partial --- but very remarkable --- results:
\begin{itemize}
 \item[(a)] E.~Matzri proved that the maximal pro-$p$ Galois group of every field containing a root of 1 of order $p$ satisfies the $3$-Massey vanishing property (see the preprint \cite{eli:Massey}, see also the published works \cite{EM:Massey,MT:Masseyall});
 \item[(b)] J.~Mina\v{c} and N.D.~T\^an proved Conjecture~\ref{conj:MvanMT} for local fields (see \cite{mt:Massey});
 \item[(c)] Y.~Harpaz and O.~Wittenberg proved Conjecture~\ref{conj:MvanMT} for number fields (see \cite{HW:Massey});
 \item[(d)] A.~Merkurjev and F.~Scavia proved that the maximal pro-$2$ Galois group of every field satisfies the $4$-Massey vanishing property (see \cite{MerSca2});
 \item[(e)] A.~P\'al and G.~Quick proved a {\sl strengthened version} of Conjecture~\ref{conj:MvanMT} in case $p=2$ for fields with virtual cohomological dimension at most 1 (see \cite{palquick:Massey}).
\end{itemize}
Further interesting results on Massey products in Galois cohomology have been obtained by various authors (see, e.g., \cite{eli2,eli3,wick,jochen,GPM,PJ,LLSWW,MerSca3}).

The purpose of the present work is to prove a {\sl strengthened version} of Conjecture~\ref{conj:MvanMT} for fields whose maximal pro-$p$ Galois group is of elementary type.

Given a pro-$p$ group $G$ and a positive integer $n>2$, if the set $\langle\alpha_1,\ldots,\alpha_n\rangle$, associated to a sequence $\alpha_1,\ldots,\alpha_n$ of elements of $\rmH^1(G,\F_p)$, is non-empty, then necessarily 
\begin{equation}\label{eq:cup 0}
 \alpha_1\smallsmile\alpha_2=\alpha_2\smallsmile\alpha_3=\ldots=\alpha_{n-1}\smallsmile\alpha_n=0
\end{equation}
--- we underline that the triviality condition \eqref{eq:cup 0} is also sufficient to imply the non-emptiness of $\langle\alpha_1,\alpha_2,\alpha_3\rangle$, in the case $n=3$.
A pro-$p$ group $G$ is said to satisfy the {\sl strong} $n$-Massey vanishing property, for $n>2$, if every sequence $\alpha_1,\ldots,\alpha_n$ of elements of $\rmH^1(G,\F_p)$ satisfying condition \eqref{eq:cup 0} yields an $n$-fold Massey product $\langle\alpha_1,\ldots,\alpha_n\rangle$ containing 0 (see \cite[Def.~1.2]{pal:Massey}). 
The strong $n$-Massey vanishing property is stronger than the $n$-Massey vanishing property; observe that, since the two properties coincide for $n=3$, E.~Matzri's result implies that the maximal pro-$p$ Galois group of a field containing a root of 1 of order $p$ satisfies, ``for free'', also the strong 3-Massey vanishing property.

By construction, a pro-$p$ group of elementary type $G$ comes endowed with an {\sl orientation}, namely, a homomorphism of pro-$p$ groups $G\to1+p\Z_p$, where $1+p\Z_p$ denotes the multiplicative group of principal units of $\Z_p$ (we give a brief review of orientations of pro-$p$ groups in \S~\ref{ssec:or}).
Our main result is the following.

\begin{thm}\label{thm:main}
Let $G$ be a pro-$p$ group of elementary type.
If $p=2$ assume further that the image of the orientation associated to $G$ is 
a subgroup of $1+4\Z_2$.
Then $G$ satisfies the strong $n$-Massey vanishing property for every $n>2$.
 \end{thm}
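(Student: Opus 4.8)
The plan is to proceed by structural induction along the construction of pro-$p$ groups of elementary type, exploiting the stability of the strong $n$-Massey vanishing property under the two building operations—free pro-$p$ products and suitable semidirect products with $\Z_p$—and verifying the property directly on the two base cases, free pro-$p$ groups and Demushkin groups. For the base cases: a free pro-$p$ group $F$ has $\rmH^2(F,\F_p)=0$, so condition \eqref{eq:cup 0} is automatic and every Massey product that is non-empty contains $0$, hence the strong $n$-Massey vanishing property holds trivially. For a Demushkin group $G$ of rank $d$, one has $\dim_{\F_p}\rmH^2(G,\F_p)=1$ and the cup product $\rmH^1\times\rmH^1\to\rmH^2$ is a non-degenerate alternating (or, for $p=2$ with the orientation in $1+4\Z_2$, symmetric-but-still-nondegenerate) form; I would argue that the vanishing $\alpha_i\smallsmile\alpha_{i+1}=0$ for consecutive indices forces the $\alpha_i$ to lie in a common isotropic subspace, and then use the known one-relator presentation of $G$ to build an explicit defining system realising $0$ in $\langle\alpha_1,\dots,\alpha_n\rangle$. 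The orientation hypothesis when $p=2$ enters precisely here, to guarantee that the relevant quadratic form behaves as in the odd-$p$ case (no extra $\alpha\smallsmile\alpha$ obstruction) and that the group is ``oriented'' in the Bloch--Kato sense needed for the cohomological argument.

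Next I would treat the inductive step for free pro-$p$ products $G=G_1\ast G_2$. Here I would use the Mayer--Vietoris-type decomposition $\rmH^n(G,\F_p)\cong\rmH^n(G_1,\F_p)\oplus\rmH^n(G_2,\F_p)$ for $n\ge1$, together with the fact that a class $\alpha\in\rmH^1(G,\F_p)$ decomposes as $(\alpha^{(1)},\alpha^{(2)})$; cup products of the ``cross terms'' vanish, so the vanishing hypothesis \eqref{eq:cup 0} for the $\alpha_i$ over $G$ is equivalent to the same conditions over each $G_j$ for the restricted classes. One then assembles a defining system for $\langle\alpha_1,\dots,\alpha_n\rangle$ over $G$ out of defining systems over $G_1$ and $G_2$, which exist by the inductive hypothesis and evaluate to $0$; compatibility of the bar-resolution cochains under the projections $G\to G_j$ shows the resulting value is $0$. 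The analogous, but more delicate, step is the semidirect product $G=H\rtimes\Z_p$ (the ``twisted'' extension by $\Z_p$ appearing in Efrat's construction): here I would use the Hochschild--Serre spectral sequence for $1\to H\to G\to\Z_p\to1$, which degenerates enough (because $\cd(\Z_p)=1$) to give a short exact sequence relating $\rmH^\bullet(G,\F_p)$ to the invariants and coinvariants of $\rmH^\bullet(H,\F_p)$ under the $\Z_p$-action, and then lift defining systems from $H$ to $G$, controlling the new contributions coming from the $\Z_p$-direction via the orientation.

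The main obstacle I expect is the semidirect-product step: unlike the free product, it genuinely mixes the cohomology of $H$ with the action of the cyclotomic/orientation character, and a naive lift of a defining system over $H$ need not remain a valid defining system over $G$—one must correct it by cochains in the $\Z_p$-direction, and showing that these corrections can be chosen so that the total Massey product value is still $0$ is where the orientation hypothesis (and, for $p=2$, the refinement to $1+4\Z_2$) must be used in an essential way. A secondary technical point is bookkeeping with non-uniqueness of defining systems: strong $n$-Massey vanishing asserts that \emph{some} defining system gives $0$, so throughout the induction I would carry along not just the statement ``the Massey product contains $0$'' but the stronger inductive hypothesis that defining systems can be chosen functorially with respect to the inclusions $G_j\hookrightarrow G$ and the quotient $G\to H$, so that they glue; formulating this enhanced inductive statement carefully is what makes the free-product and semidirect-product steps go through uniformly. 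Finally, since every closed subgroup of a pro-$p$ group of elementary type is again of elementary type (with orientation restricted accordingly, still landing in $1+4\Z_2$ when $p=2$), the property obtained is automatically hereditary, which is what is needed to match the cup-defining formulation of Mina\v{c}--T\^an.
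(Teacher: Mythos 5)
Your overall skeleton --- structural induction along the elementary-type construction, with free pro-$p$ groups and Demushkin groups as base cases and closure under free pro-$p$ products and semidirect products with $\Z_p$ --- is exactly the paper's strategy, and your treatment of free groups, of Demushkin groups (modulo citing P\'al--Szab\'o rather than reproving it), and of free products is fine in spirit. The genuine gap is in the semidirect-product step, which is the only step requiring a new argument. First, you have the extension backwards: in Efrat's construction the new factor $\Z_p$ is the \emph{normal} subgroup, acted on by $G_0$ through the orientation $\theta$, so the relevant exact sequence is $1\to\Z_p\to G\to G_0\to 1$ with $G_0$ a complement; there is no normal subgroup $H$ with quotient $\Z_p$ from which to lift defining systems, and a Hochschild--Serre argument in your direction does not get off the ground. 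Second, and more importantly, you are missing the key mechanism. Writing $\alpha_i=\alpha_i\vert_{G_0}+b_i\psi$ with $\psi$ generating $\rmH^1(\Z_p,\F_p)$, the decomposition $\rmH^2(G,\F_p)=\rmH^2(G_0,\F_p)\oplus\bigl(\rmH^1(G_0,\F_p)\smallsmile\psi\bigr)$ shows that hypothesis \eqref{eq:cup 0} forces a sharp dichotomy: either all $b_i=0$, so every $\alpha_i$ is inflated from $G_0$ and one simply composes a unitriangular representation of $G_0$ with the projection $G\to G_0$; or all $b_i\neq0$ and the $\alpha_i$ are pairwise proportional, so the Massey product reduces (via Proposition~\ref{prop:Massey cup}--(ii)) to the cyclic product $\langle\alpha_1,\dots,\alpha_1\rangle$. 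That second case cannot be handled by any lifting or correction of defining systems: it requires Theorem~\ref{thm:kummer Massey cyc}, i.e.\ the vanishing of repeated Massey products for Kummerian oriented pro-$p$ groups, proved by an explicit construction of unitriangular representations (Lemma~\ref{lem:Liealg}, Proposition~\ref{prop:commutators}). This is precisely where the torsion-free orientation hypothesis ($\Img(\theta)\subseteq 1+4\Z_2$ when $p=2$) enters --- it guarantees the Kummerian property --- not through any quadratic-form consideration as you suggest.

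A secondary point: your claim that consecutive cup-product vanishing forces the $\alpha_i$ into a common isotropic subspace of $\rmH^1$ of a Demushkin group is false in general (nothing constrains $\alpha_1\smallsmile\alpha_3$, say), so your sketch of that base case would not go through as written. Also, the ``functorial choice of defining systems'' you propose to carry through the induction is unnecessary once one works with Dwyer's reformulation (Proposition~\ref{prop:masse unip}): vanishing is equivalent to the existence of a homomorphism $G\to\dbU_{n+1}$ with prescribed superdiagonal, and such homomorphisms glue across free products by the universal property and push forward along $G\to G_0$ with no compatibility bookkeeping.
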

 
 To prove Theorem~\ref{thm:main}, we exploit a result --- whose original formulation, for discrete groups, is due to W.~Dwyer, see \cite{dwyer} --- which interprets the vanishing of Massey products in the $\F_p$-cohomology of a pro-$p$ group $G$ in terms of  the existence of certain upper unitriangular representations of $G$.
 Moreover, we use the {\sl Kummerian property} --- a formal version of Hilbert~90, introduced in \cite{eq:kummer} ---, which guarantees the vanishing of ``cyclic'' Massey products (see Theorem~\ref{thm:kummer Massey cyc}), and which is enjoyed both by pro-$p$ groups of elementary type and maximal pro-$p$ Galois groups of fields containing a root of 1 of order $p$ (we give a brief review of the Kummerian property in \S~\ref{ssec:kummer}).

 Let $\K$ be a field containing a root of 1 of order $p$. 
 It has been shown that $G_{\K}(p)$ satisfies the strong $n$-Massey vanishing property for every $n>2$, if $p$ is odd and $\K$ is $p$-rigid, by J.~Mina\v{c} and N.D.~T\^an (see \cite[Thm.~8.5]{mt:conj}); and if $\K$ has virtual cohomological dimension at most 1 or it is pseudo $p$-adically closed, by A.~P\'al and E.~Szab\'o (see \cite{pal:Massey}). 
 As a consequence of Theorem~\ref{thm:main}, we obtain the following.
 
\begin{cor}\label{cor:fields}
 Let $\K$ be a field containing a root of 1 of order $p$, such that the quotient $\K^\times/(\K^\times)^p$ is finite. 
If $p=2$ suppose further that 
$\sqrt{-1}\in\K$.
 Then $G_{\K}(p)$ satisfies the strong $n$-Massey vanishing property for every $n>2$ in the following cases:
  \begin{itemize}
  \item[(a)] $\K$ is a local field, or an extension of transcendence degree 1 of a local field;
\item[(b)] $\K$ is a PAC field, or an extension of relative transcendence degree 1 of a PAC field;
\item[(c)] $\K$ is $p$-rigid {\rm (}for the definition of $p$-rigid fields see \cite[p.~722]{ware}{\rm )};
\item[(d)] $\K$ is an algebraic extension of a global field of characteristic not $p$;
\item[(e)] $\K$ is a valued $p$-Henselian field with residue field $\kappa$, and $G_{\kappa}(p)$ satisfies the strong $n$-Massey vanishing property for every $n>2$.
 \end{itemize}
\end{cor}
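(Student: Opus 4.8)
The plan is to deduce Corollary~\ref{cor:fields} from Theorem~\ref{thm:main} by checking, in each of the listed cases, that $G_{\K}(p)$ is a pro-$p$ group of elementary type and that, when $p=2$, the orientation attached to it has image in $1+4\Z_2$. First observe that the hypothesis ``$\K^\times/(\K^\times)^p$ finite'' is exactly the requirement that $G_{\K}(p)$ be finitely generated: by Kummer theory $\rmH^1(G_{\K}(p),\F_p)\isom\K^\times/(\K^\times)^p$, and a pro-$p$ group is finitely generated if and only if $\rmH^1(-,\F_p)$ is finite-dimensional --- this matters because Definition~\ref{defin:ET} concerns finitely generated pro-$p$ groups. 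Next, the orientation carried by $G_{\K}(p)$ in its incarnation as a maximal pro-$p$ Galois group is the pro-$p$ cyclotomic character $\theta_{\K}\colon G_{\K}(p)\to1+p\Z_p$, recording the action on the $p$-power roots of unity lying in $\K(p)$ (see \S~\ref{ssec:or} and \cite{qw:cyc,eq:kummer}). For $p=2$ the extra hypothesis $\sqrt{-1}\in\K$ says precisely that $\mu_4\subseteq\K$, i.e.\ $G_{\K}(2)$ acts trivially on $\mu_4$, equivalently $\theta_{\K}(g)\equiv1\pmod4$ for all $g$; that is, the image of $\theta_{\K}$ lies in $1+4\Z_2$. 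Hence the additional assumption for $p=2$ in Corollary~\ref{cor:fields} supplies exactly the additional assumption of Theorem~\ref{thm:main} for the orientation $\theta_{\K}$.

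It then remains to see that $G_{\K}(p)$ is of elementary type in cases (a)--(d). In (a), if $\K$ is a local field containing $\mu_p$ then $G_{\K}(p)$ is a Demushkin group, which is of elementary type by construction; for an extension of transcendence degree $1$ of a local field one invokes the known validity of the Elementary Type Conjecture in this case (cf.\ \cite{ido:etc,ido:etc2}). In (b), the Elementary Type Conjecture holds for PAC fields and for extensions of relative transcendence degree $1$ of PAC fields, by \cite[Ch.~11]{friedjarden} and \cite[\S~5]{ido:Hasse}. In (c), a $p$-rigid field has $G_{\K}(p)$ of the iterated shape described in \cite{ware} --- a free abelian pro-$p$ group extended repeatedly by $\Z_p$ --- which is of elementary type (for $p$ odd this case is already contained in \cite[Thm.~8.5]{mt:conj}). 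In (d) one uses \cite{ido:etc2}, where the Elementary Type Conjecture is established for algebraic extensions of global fields of characteristic $\neq p$. In each case the orientation of the elementary-type group so obtained is the cyclotomic character $\theta_{\K}$ of the previous paragraph, so Theorem~\ref{thm:main} applies directly and yields the strong $n$-Massey vanishing property for every $n>2$.

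Case (e) is handled slightly differently, since we do not assume $G_{\kappa}(p)$ itself to be of elementary type. Here I would appeal to the structure theory of $p$-henselian valued fields (see \cite{ware} and the references in \S~\ref{ssec:or}): if $\K$ is $p$-henselian with residue field $\kappa$ and value group $\Gamma$, then $\Gamma/p\Gamma$ is finite (again because $\K^\times/(\K^\times)^p$ is, through the valuation) and $G_{\K}(p)$ is obtained from $G_{\kappa}(p)$ by iterating $\dim_{\F_p}(\Gamma/p\Gamma)$ times the elementary-type operation $H\mapsto H\rtimes\Z_p$, with the action governed by the cyclotomic character --- which again has image in $1+4\Z_2$ when $p=2$ and $\sqrt{-1}\in\K$, the property being inherited by the residue field. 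The proof of Theorem~\ref{thm:main} proceeds precisely by showing that the strong $n$-Massey vanishing property for all $n>2$ is inherited both by free pro-$p$ products and by such semidirect products with $\Z_p$ (under the orientation condition when $p=2$); feeding in $G_{\kappa}(p)$, which enjoys this property by hypothesis, propagates it to $G_{\K}(p)$.

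The genuinely delicate content is all packaged inside Theorem~\ref{thm:main} and the cited Elementary Type results; the only real care needed in Corollary~\ref{cor:fields} concerns (i) the identification of the abstract orientation of an elementary-type group with the arithmetic cyclotomic character $\theta_{\K}$, so that ``$\sqrt{-1}\in\K$'' literally becomes ``image in $1+4\Z_2$'', and (ii) in case (e), checking that the $p$-henselian structure realizes $G_{\K}(p)$ through exactly the operations --- and with exactly the orientations on the intermediate quotients --- used in the inductive proof of Theorem~\ref{thm:main}.
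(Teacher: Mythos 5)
Your proposal is correct and follows essentially the same route as the paper: one invokes the known elementary-type results for these classes of fields (the paper's Proposition~\ref{prop:etc fields}), notes that $\sqrt{-1}\in\K$ makes the cyclotomic orientation $\theta_{\K}$ torsion-free, and applies Theorem~\ref{thm:main}, handling the $p$-Henselian case via the semidirect-product inheritance of Theorem~\ref{thm:semidirectprod} (the paper's Corollary~\ref{cor:semidirect}). The only quibbles are bibliographic (e.g.\ function fields over local fields rest on \cite{efrat:funfield} and the Henselian decomposition on \cite{EK,henselian} rather than \cite{ware}), not mathematical.
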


In \cite[Question~4.2]{JT:U4}, J.~Mina\v{c} and N.D.~T\^an asked the following.

\begin{question}\label{conj:MvanMT strong}
Let $p$ be a prime, and let $\K$ be a field containing a root of 1 of order $p$.
 Does the maximal pro-$p$ Galois group $G_{\K}(p)$ of $\K$ satisfy the strong $n$-Massey vanishing property for every $n>2$?
\end{question}

The original formulation of \cite[Question~4.2]{JT:U4} involves the {\sl cup-defining $n$-fold Massey product property}, which is equivalent to the strong $n$-Massey vanishing property, if required for all $n\geq3$ --- see \cite[Rem.~4.6]{JT:U4} and Remark~\ref{rem:cupdef} below.
Question~\ref{conj:MvanMT strong} has a negative answer in case $p=2$.
Indeed, in \cite[Example~A.15]{GPM}, O.~Wittenberg produced an example (suggested by Y.~Harpaz) of a number field $\K$ not containing $\sqrt{-1}$ whose maximal pro-$2$ Galois group $G_{\mathbb{K}}(2)$ does not satisfy the strong $4$-Massey vanishing property.
 Moreover, recently A.~Merkurjev and F.~Scavia showed that every field $\K$ has an extension $\mathbb{L}$ whose maximal pro-$2$ Galois group $G_{\mathbb{L}}(2)$ does not satisfy the strong $4$-Massey vanishing property (cf. \cite[Thm.~6.3]{MerSca1}).
 
Wittenberg's example and Merkurjev-Scavia's result involve pro-2 groups that are not finitely generated.
Thus, we ask whether Question~\ref{conj:MvanMT strong} may have a positive answer under the further conditions that the maximal pro-$p$ Galois group is {\sl finitely generated}, and $\sqrt{-1}\in\K$ if $p=2$.

\begin{question}\label{ques:p2}
Let $\K$ be a field containing a root of 1 of order $p$, such that the quotient $\K^\times/(\K^\times)^p$ is finite.
If $p=2$ suppose further that 
$\sqrt{-1}\in\K$.
Does the maximal pro-$p$ Galois group $G_{\K}(p)$ of $\K$ satisfy the strong $n$-Massey vanishing property for every $n>2$?
\end{question}

By Theorem~\ref{thm:main}, a positive solution of the Elementary Type Conjecture would yield a positive answer to Question~\ref{ques:p2}.

 
{\small \subsection*{Acknowledgments}
The author wishes to thank I.~Efrat, E.~Matzri, J.~Mina\v{c}, F.W.~Pasini, and N.D.~T\^an, for several inspiring discussions on Massey products in Galois cohomology and pro-$p$ groups of elementary type, which occurred in the past years; A.~P\'al, F.~Scavia, P.~Wake, O.~Wittenberg, and again J.~Mina\v{c} and N.D.~T\^an, for their useful comments on this work. 
Last, but not least, the author is very grateful to the referees for the careful work carried with the manuscript and for the extremely useful comments and suggestions.
}

 
 \section{Massey products and pro-$p$ groups}\label{sec:Massey}
 
  Let $G$ be a pro-$p$ group, and let $\F_p$ be the finite field with $p$ elements, considered as a trivial $G$-module.
 For basic notions on pro-$p$ groups and their $\F_p$-cohomology, we refer to \cite[Ch.~I, \S~4]{serre:galc} and to \cite[Ch.~I, and Ch.~III \S~3]{nsw:cohn}.

 Given a pro-$p$ group $G$, and two subsets $S_1,S_2$ of $\rmH^1(G,\F_p)$, we set
 \[ S_1\smallsmile S_2=\left\{\:\alpha_1\smallsmile\alpha_2\:\mid\:\alpha_1\in S_1,\:\alpha_2\in S_2\:\right\}. \]

 
 \subsection{Massey products in Galois cohomology}
 Here we give a brief review on Massey products in the Galois cohomology of pro-$p$ groups.
 Throughout the paper, we will be merely concerned with Massey products of elements in the first cohomology group $\rmH^1(G, \F_p)$, whose definition will be recalled here below.
   Our main references are \cite{vogel} and \cite{mt:Massey} --- for a general definition of Massey products on the level of cochains the reader may consult \cite{dwyer,kraines}. 
 
 For $n\in\{1,2,3\}$ let $\mathfrak{C}^n$ denote the $\F_p$-vector spaces of continuous maps
 $$c\colon \underbrace{G\times\cdots \times G}_{n\text{ times}}\to\F_p$$ 
 ($G\times \cdots \times G$ is to be intended as the cartesian product of topological spaces).
 These vector spaces come equipped with homomorphisms $\partial^n\colon\mathfrak{C}^n\to\mathfrak{C}^{n+1}$, $n=1,2$, defined by
 \[
 \begin{split}
    \partial^1(c) (g_1,g_2) &= c(g_1)-c(g_1g_2)+c(g_2),\\
 \partial^2(c') (g_1,g_2,g_3) &= c'(g_1,g_2)-c'(g_1,g_2g_3)+c'(g_1g_2,g_3)-c'(g_2,g_3),
 \end{split}
 \]
 for every $c\in\mathfrak{C}^1$, $c'\in\mathfrak{C}^2$, and $g_1,g_2,g_3\in G$.
We recall that 
\begin{equation}\label{eq:H1}
 \rmH^1(G,\F_p)=\Ker(\partial^1)=\mathrm{Hom}(G,\F_p)
\end{equation}
--- where the latter is the group of homomorphisms of pro-$p$ groups $G\to\F_p$, with $\F_p$ considered as a cyclic group of order $p$ ---, while 
\begin{equation}\label{eq:H2}
\rmH^2(G,\F_p)=\Ker(\partial^2)/\Img(\partial^1). 
\end{equation}

 For $c,c'\in\mathfrak{C}^1$, one defines $c\cdot c'\in\mathfrak{C}^2$ by $(c\cdot c')(g_1,g_2)=c(g_1)\cdot c'(g_2)$ for every $g_1,g_2\in G$. 
 Then $\partial^2(c\cdot c')=\partial^1(c)\cdot c'-c\cdot\partial^1(c')$ (cf. \cite[Prop.~1.4.1]{nsw:cohn}).
Consequently, for $\alpha,\alpha'\in\rmH^1(G,\F_p)$ one has $\alpha\cdot \alpha'\in\Ker(\partial^2)$, so that one defines the cup-product $\alpha\smallsmile\alpha'$ of $\alpha$ and $\alpha'$ to be the class of $\alpha\cdot \alpha'$ in $\rmH^2(G,\F_p)$.
 
 \begin{rem}\label{rem:cup gradcomm}\rm
  For every $\alpha,\alpha'\in\rmH^1(G,\F_p)$, one has $\alpha'\smallsmile\alpha=-\alpha\smallsmile\alpha'$ (cf. \cite[Prop.~1.4.4]{nsw:cohn}).
  In particular, if $p\neq2$ then $\alpha\smallsmile\alpha=0$.
 \end{rem}

For $n\geq2$ let $\alpha_1,\ldots,\alpha_n$ be a sequence of elements of $\rmH^1(G,\F_p)$. 
A collection $\mathfrak{c}=(c_{ij})$, $1\leq i\leq j\leq n$, of elements of $\mathfrak{C}^1$ is called a {\sl defining set} for the $n$-fold Massey product $\langle\alpha_1,\ldots,\alpha_n\rangle$ if the following conditions hold:
\begin{itemize}
 \item[(a)] $c_{ii}=\alpha_i$ for every $i=1,\ldots,n$;
 
 \item[(b)] for every couple $(i,j)$ such that $1\leq i<j\leq n$ and $(i,j)\neq (1,n)$, one has 
 \begin{equation}\label{eq:defset}
 \partial^1(c_{ij})=\sum_{h=1}^{j-1}c_{i,h}\cdot c_{h+1,j}.
\end{equation}
\end{itemize}
Then $\sum_{h=1}^{n-1}c_{1,h}\cdot c_{h+1,n}$ lies in $\Ker(\partial^2)$, and its class in $\rmH^2(G,\F_p)$ is called the {\sl value} of $\mathfrak{c}$.
The $n$-fold Massey product $\langle\alpha_1,\ldots,\alpha_n\rangle$ is the subset of $\rmH^2(G,\F_p)$ consisting of the values of all its defining sets.
Observe that if $n=2$, then 
\begin{equation}\label{eq:2Massey cup}
\langle\alpha_1,\alpha_2\rangle=\left\{\alpha_1\cdot\alpha_2+\Img(\partial^1)\right\}=\{\alpha_1\smallsmile\alpha_2\}.
\end{equation}
 
 \begin{rem}\label{rem:Massey trivialcup}\rm
Let $\alpha_1,\ldots,\alpha_n$ be a sequence of elements of $\rmH^1(G,\F_p)$, $n>2$. 
By \eqref{eq:defset}, the existence of a defining set for the $n$-fold Massey product $\langle\alpha_1,\ldots,\alpha_n\rangle$ implies that $\alpha_i\cdot\alpha_{i+1}\in\Img(\partial^1)$ for every $i=1,\ldots,n-1$, i.e., $\alpha_i\smallsmile\alpha_{i+1}=0$. 

\noindent Moreover, if $n=3$ this condition is also sufficient for the existence of a defining set $\mathfrak{c}=(c_{ij})$ for the 3-fold Massey product $\langle\alpha_1,\alpha_2,\alpha_3\rangle$: indeed, if $\alpha_1\smallsmile\alpha_2=\alpha_2\smallsmile\alpha_3=0$ then there exist $c_{1,2},c_{2,3}\in\mathfrak{C}^1$ such that $\partial^1(c_{1,2})=\alpha_1\cdot\alpha_2$ and $\partial^1(c_{2,3})=\alpha_2\cdot\alpha_3$, and thus $\langle\alpha_1,\alpha_2,\alpha_3\rangle\neq\varnothing$.
 \end{rem}
 
\begin{defin}\rm
Let $G$ be a pro-$p$ group, let $n$ be a positive integer, $n\geq2$, and let $\alpha_1,\ldots,\alpha_n$ be a sequence of elements of $\rmH^1(G,\F_p)$.
\begin{itemize}
 \item[(a)] The $n$-fold Massey product $\langle\alpha_1,\ldots,\alpha_n\rangle$ is said to be {\sl defined} if it is non-empty --- i.e., if there exists at least one defining set.
 \item[(b)] The $n$-fold Massey product $\langle\alpha_1,\ldots,\alpha_n\rangle$ is said to {\sl vanish} if $0\in \langle\alpha_1,\ldots,\alpha_n\rangle$.
 \end{itemize}
 \end{defin}
 
 \begin{defin}\rm
 Let $G$ be a pro-$p$ group, and let $n$ be a positive integer, $n\geq2$.
\begin{itemize}
 \item[(a)] The group $G$ is said to satisfy the {\sl cup-defining $n$-fold Massey property} (with respect to $\F_p$) if every $n$-fold Massey product $\langle\alpha_1,\ldots,\alpha_n\rangle$ in the $\F_p$-cohomology of $G$ is defined whenever
\begin{equation}\label{eq:cond trivial cup def}
 \alpha_i\smallsmile \alpha_{i+1}=0\qquad\text{for every }i=1,\ldots,n-1.
\end{equation}
 \item[(b)] The group $G$ is said to satisfy the {\sl $n$-Massey vanishing property} (with respect to $\F_p$) if every defined $n$-fold Massey product in the $\F_p$-cohomology of $G$ vanishes.
 \item[(c)] The group $G$ is said to satisfy the {\sl strong} $n$-Massey vanishing property (with respect to $\F_p$) if it satisfies both the cup-defining $n$-fold Massey property and the $n$-fold Massey vanishing property; namely, the $n$-fold Massey product $\langle\alpha_1,\ldots,\alpha_n\rangle$ vanishes whenever condition \eqref{eq:cond trivial cup def} is satisfied.
\end{itemize}
\end{defin}

\begin{rem}\label{rem:cupdef}\rm
If a pro-$p$ group $G$ has the cup-defining $n$-fold Massey property, with $n\geq4$, then $G$ has the vanishing $(n-1)$-fold Massey vanishing property, as observed in \cite[Rem.~4.6]{JT:U4}.
Therefore, $G$ has the strong $n$-fold Massey vanishing property for every $n\geq3$ if, and only if, it has the cup-defining $n$-fold Massey product property for every $n\geq3$.
In particular, \cite[Question~4.2]{JT:U4} is equivalent to Question~\ref{conj:MvanMT strong}. 
\end{rem}

Massey products in the $\F_p$-cohomology of pro-$p$ groups enjoy the following properties (cf., e.g., \cite[Prop.~1.2.3--1.2.4]{vogel} and \cite[Rem.~2.2]{mt:Massey}).

 \begin{prop}\label{prop:Massey cup}
 Let $G$ be a pro-$p$ group and let $\alpha_1,\ldots,\alpha_n$ be a sequence of elements of $\rmH^1(G,\F_p)$, with $n>2$.
 Suppose that the $n$-fold Massey product $\langle\alpha_1,\ldots,\alpha_n\rangle$ is defined.
 \begin{itemize}
 \item[(i)] If $\alpha_i=0$ for some $i$, then the $n$-fold Massey product $\langle\alpha_1,\ldots,\alpha_n\rangle$ vanishes.
 \item[(ii)] If the $n$-fold Massey product $\langle\alpha_1,\ldots,\alpha_n\rangle$ is defined, then for every $a\in\F_p$ and $i\in\{1,\ldots,n\}$ one has 
 \[
\langle\alpha_1,\ldots,a\alpha_i,\ldots,\alpha_n\rangle\supseteq \{\:a\beta\:\mid\:\beta\in\langle\alpha_1,\ldots,\alpha_n\rangle \:\}.
 \] 
\item[(iii)] If the set $\langle\alpha_1,\ldots,\alpha_n\rangle$ is not empty, then it is closed under adding $\alpha_1\smallsmile\alpha'$ and $\alpha_n\smallsmile\alpha'$ for any $\alpha'\in\rmH^1(G,\F_p)$.
\end{itemize}
\end{prop}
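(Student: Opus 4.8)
The plan is to establish (i), (ii) and (iii) by elementary manipulations of defining sets, exploiting that the conditions \eqref{eq:defset} and the formula for the value are $\F_p$-multilinear in the entries $c_{ij}$, and that for $j\leq h\leq k$ one has the disjoint decomposition of index ranges $\{j,j+1,\ldots,k\}=\{j,\ldots,h\}\sqcup\{h+1,\ldots,k\}$. Throughout, fix a defining set $\mathfrak{c}=(c_{jk})$, $1\leq j\leq k\leq n$, for $\langle\alpha_1,\ldots,\alpha_n\rangle$ --- which exists precisely because the Massey product is assumed defined --- and let $\beta$ denote its value.

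For item (i), suppose $\alpha_i=0$ and define a new collection $\mathfrak{c}'=(c'_{jk})$ by setting $c'_{jk}=0$ whenever $j\leq i\leq k$ and $c'_{jk}=c_{jk}$ otherwise. If $i\in\{j,\ldots,k\}$, then for each $h$ with $j\leq h\leq k-1$ the index $i$ lies in exactly one of $\{j,\ldots,h\}$ and $\{h+1,\ldots,k\}$, so at least one factor of each product $c'_{j,h}\cdot c'_{h+1,k}$ vanishes; if $i\notin\{j,\ldots,k\}$, then none of the entries appearing in \eqref{eq:defset} for the pair $(j,k)$ is altered. A short check --- using $c'_{ii}=0=\alpha_i$ --- shows that $\mathfrak{c}'$ is again a defining set, and since $i\in\{1,\ldots,n\}$ its value is $0$; hence $0\in\langle\alpha_1,\ldots,\alpha_n\rangle$. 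For item (ii) one argues identically, with rescaling in place of annihilation: given $a\in\F_p$, set $c'_{jk}=a\,c_{jk}$ if $j\leq i\leq k$ and $c'_{jk}=c_{jk}$ otherwise. The same disjointness shows that each product $c'_{j,h}\cdot c'_{h+1,k}$ equals $a\,(c_{j,h}\cdot c_{h+1,k})$ when $i\in\{j,\ldots,k\}$ and is unchanged otherwise, so by $\F_p$-linearity of $\partial^1$ the collection $\mathfrak{c}'$ is a defining set for $\langle\alpha_1,\ldots,a\alpha_i,\ldots,\alpha_n\rangle$ with value $a\beta$. Letting $\beta$ range over $\langle\alpha_1,\ldots,\alpha_n\rangle$ yields the asserted inclusion.

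For item (iii), I would instead modify a single boundary-adjacent entry. Since $n>2$, the pairs $(1,n-1)$ and $(2,n)$ are admissible and distinct both from the diagonal and from $(1,n)$; moreover an inspection of \eqref{eq:defset} shows that $c_{1,n-1}$ (resp.\ $c_{2,n}$) occurs in \eqref{eq:defset} for no pair other than its own, and occurs in the value only through the single term $c_{1,n-1}\cdot c_{n,n}$ (resp.\ $c_{1,1}\cdot c_{2,n}$). Hence, for any $z\in\rmH^1(G,\F_p)=\Ker(\partial^1)$, replacing $c_{1,n-1}$ by $c_{1,n-1}+z$ (resp.\ $c_{2,n}$ by $c_{2,n}+z$) preserves every condition \eqref{eq:defset} and changes the value by the class of the cochain $z\cdot\alpha_n$, i.e., by $z\smallsmile\alpha_n$ (resp.\ by the class of $\alpha_1\cdot z$, i.e., by $\alpha_1\smallsmile z$). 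As $z$ runs over all of $\rmH^1(G,\F_p)$ and $z\smallsmile\alpha_n=-\alpha_n\smallsmile z$ by graded-commutativity (Remark~\ref{rem:cup gradcomm}), this shows that $\langle\alpha_1,\ldots,\alpha_n\rangle$ is stable under adding $\alpha_n\smallsmile\alpha'$, and symmetrically under adding $\alpha_1\smallsmile\alpha'$, for every $\alpha'\in\rmH^1(G,\F_p)$.

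All of these verifications are elementary; the one point that requires a little care is the bookkeeping in item (iii) --- confirming that the two boundary-adjacent entries are genuinely free apart from their contribution to the value --- together with the routine re-checking of \eqref{eq:defset} for every pair in items (i) and (ii), which in each case reduces to the disjoint-interval observation above. I do not expect any real obstacle here: these statements are the pro-$p$ cochain-level incarnations of classical properties of Massey products, and one could equally deduce (i)--(iii) via the correspondence between defined $n$-fold Massey products and upper unitriangular representations of $G$, by post-composing with the diagonal rescaling automorphisms of the group of upper unitriangular matrices and by multiplying on the left or right by a suitable elementary matrix, respectively.
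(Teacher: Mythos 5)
Your proof is correct. There is nothing in the paper to compare it against line by line: Proposition~\ref{prop:Massey cup} is stated without proof and only attributed to \cite[Prop.~1.2.3--1.2.4]{vogel} and \cite[Rem.~2.2]{mt:Massey}. Your cochain-level manipulations are precisely the standard arguments underlying those references, and all three verifications go through: in (i) and (ii) the key point is, as you say, that for each $h$ with $j\leq h\leq k-1$ the index $i$ lies in exactly one of $\{j,\ldots,h\}$ and $\{h+1,\ldots,k\}$, so annihilating (resp.\ rescaling by $a$) every entry $c_{jk}$ with $j\leq i\leq k$ kills (resp.\ multiplies by exactly one factor of $a$) each product on the right of \eqref{eq:defset}, while $\partial^1$ is $\F_p$-linear on the left; and in (iii) the entries $c_{1,n-1}$ and $c_{2,n}$ indeed occur on the right-hand side of \eqref{eq:defset} for no admissible pair (the only pair that would involve them is the excluded $(1,n)$), and contribute to the value only through $c_{1,n-1}\cdot\alpha_n$ and $\alpha_1\cdot c_{2,n}$, so perturbing them by a cocycle $z\in\Ker(\partial^1)=\rmH^1(G,\F_p)$ shifts the value by $z\smallsmile\alpha_n$, resp.\ $\alpha_1\smallsmile z$, and graded-commutativity converts the first family into $\{\alpha_n\smallsmile\alpha'\}$. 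Two minor remarks: you implicitly correct the typo in \eqref{eq:defset}, whose sum should run over $h=i,\ldots,j-1$; and the hypothesis $n>2$ is exactly what makes $(1,n-1)$ and $(2,n)$ off-diagonal and distinct from $(1,n)$, which you rightly flag. The alternative route you sketch via homomorphisms to $\bar\dbU_{n+1}$ and $\dbU_{n+1}$ (Proposition~\ref{prop:masse unip}), conjugating or rescaling by diagonal and elementary matrices, is equally valid and closer in spirit to how the proposition is actually deployed later in the paper.
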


 
 \subsection{Massey products and upper unitriangular matrices}
 
 Massey products may be ``translated'' in terms of unipotent upper-triangular homomorphisms of $G$ as follows.
For $n\geq 2$ let
\[
 \dbU_{n+1}=\left\{\left(\begin{array}{ccccc} 1 & a_{1,2} & \cdots & & a_{1,n+1} \\ & 1 & a_{2,3} &  \cdots & \\
 &&\ddots &\ddots& \vdots \\ &&&1& a_{n,n+1} \\ &&&&1 \end{array}\right)\mid a_{i,j}\in\F_p \right\}\subseteq 
 \mathrm{GL}_{n+1}(\F_p)
\]
be the group of unipotent upper-triangular $(n+1)\times(n+1)$-matrices over $\F_p$.
Let $I_{n+1}$ denote the $(n+1)\times(n+1)$ identity matrix, and for $1\leq i<j\leq n+1$, let $E_{ij}$ denote the $(n+1)\times(n+1)$-matrix with 1 at the entry $(i,j)$, and 0 elsewhere.
We set $\bar \dbU_{n+1}=\dbU_{n+1}/Z$, where $Z$ denotes the normal subgroup
 \begin{equation}\label{eq:zen U}
  Z=\{\:I_{n+1}+aE_{1,n+1}\:\mid\:a\in\F_p\:\}.  
 \end{equation}

For a homomorphism of pro-$p$ groups $\rho\colon G\to\dbU_{n+1}$, and for $1\leq i\leq n$, let $\rho_{i,i+1}$ denote the projection of $\rho$ on the $(i,i+1)$-entry.
Observe that $\rho_{i,i+1}\colon G\to\F_p$ is a homomorphism of pro-$p$ groups, and thus we may consider $\rho_{i,i+1}$ as an element of $\rmH^1(G,\F_p)$.
One has the following ``pro-$p$ translation'' of Dwyer's result on Massey products (cf., e.g., \cite[Lemma~9.3]{eq:kummer}, see also \cite[\S~8]{ido:Massey}).

\begin{prop}\label{prop:masse unip}
Let $G$ be a pro-$p$ group and let $\alpha_1,\ldots,\alpha_n$ be a sequence of elements of $\rmH^1(G,\F_p)$, with $n\geq2$.
\begin{itemize}
 \item[(i)] The $n$-fold Massey product $\langle\alpha_1,\ldots,\alpha_n\rangle$ is defined if, and only if, there exists a continuous homomorphism $ \bar\rho\colon G\to\bar\dbU_{n+1}$ such that $\bar\rho_{i,i+1}=\alpha_i$ for every $i=1,\ldots,n$.
 \item[(ii)] The $n$-fold Massey product $\langle\alpha_1,\ldots,\alpha_n\rangle$ vanishes if, and only if, there exists a continuous homomorphism $ \rho\colon G\to\dbU_{n+1}$ such that $\rho_{i,i+1}=\alpha_i$ for every $i=1,\ldots,n$.
\end{itemize}

\end{prop}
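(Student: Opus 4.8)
The plan is to make explicit the standard dictionary --- the pro-$p$ form of Dwyer's theorem, cf.\ the references cited just above the statement --- between continuous homomorphisms of $G$ into (a quotient of) $\dbU_{n+1}$ and defining sets for $\langle\alpha_1,\ldots,\alpha_n\rangle$, and then to read off both equivalences. First I would note that a continuous map $\rho\colon G\to\dbU_{n+1}$ is the same datum as a family of continuous maps $\rho_{a,b}\colon G\to\F_p$, $1\le a<b\le n+1$, via $\rho(g)=I_{n+1}+\sum_{a<b}\rho_{a,b}(g)E_{a,b}$; expanding $\rho(g)\rho(h)$ by means of $E_{a,b}E_{c,d}=\delta_{b,c}E_{a,d}$ and comparing with $\rho(gh)$, one gets that $\rho$ is a homomorphism if and only if
\[
\partial^1(\rho_{a,b})=-\sum_{a<c<b}\rho_{a,c}\cdot\rho_{c,b}\qquad\text{for every }1\le a<b\le n+1 .
\]
Setting $c_{i,j}:=(-1)^{j-i}\rho_{i,j+1}$ for $1\le i\le j\le n$ and using the elementary identity $(-1)^{j-i}=-(-1)^{h-i}(-1)^{j-h-1}$ valid for $i\le h\le j-1$, a substitution turns the homomorphism relation at the entry $(i,j+1)$ into exactly the defining-set relation \eqref{eq:defset}, while $c_{i,i}=\rho_{i,i+1}$. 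The structural point --- which is what separates (i) from (ii) --- is that the entry in position $(1,n+1)$ corresponds to $c_{1,n}$, which appears in none of the relations \eqref{eq:defset} and in no value of a defining set, and that passing from $\dbU_{n+1}$ to $\bar\dbU_{n+1}=\dbU_{n+1}/Z$ is precisely the operation of forgetting this entry, equivalently of dropping the unique homomorphism relation indexed by $(1,n+1)$.

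Granting the dictionary, part (i) goes as follows. If $\bar\rho\colon G\to\bar\dbU_{n+1}$ is a continuous homomorphism with $\bar\rho_{i,i+1}=\alpha_i$, then its entries $\rho_{a,b}$ for $(a,b)\ne(1,n+1)$ are well-defined continuous maps satisfying all the homomorphism relations except possibly the one at $(1,n+1)$; putting $c_{i,j}:=(-1)^{j-i}\rho_{i,j+1}$ for $(i,j)\ne(1,n)$ and $c_{1,n}:=0$ yields a defining set for $\langle\alpha_1,\ldots,\alpha_n\rangle$, so this Massey product is defined. Conversely, starting from any defining set $(c_{i,j})$, I would put $\rho_{a,b}:=(-1)^{b-1-a}c_{a,b-1}$ for $(a,b)\ne(1,n+1)$ and $\rho_{1,n+1}:=0$; these maps are continuous and satisfy every homomorphism relation except possibly at $(1,n+1)$ (for $b=a+1$ this is just $\alpha_a\in\Ker(\partial^1)$, and for $b\ge a+2$ it is \eqref{eq:defset}), hence they assemble into a continuous homomorphism $\bar\rho\colon G\to\bar\dbU_{n+1}$ with $\bar\rho_{i,i+1}=\alpha_i$.

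For part (ii), given a continuous homomorphism $\rho\colon G\to\dbU_{n+1}$ with $\rho_{i,i+1}=\alpha_i$, I would now define $c_{i,j}:=(-1)^{j-i}\rho_{i,j+1}$ for all $1\le i\le j\le n$, including $c_{1,n}=(-1)^{n-1}\rho_{1,n+1}$; the homomorphism relation at $(1,n+1)$, after keeping track of the signs, reads $\partial^1(c_{1,n})=\sum_{h=1}^{n-1}c_{1,h}\cdot c_{h+1,n}$, so the value of this defining set is the class of a coboundary, namely $0$, and $\langle\alpha_1,\ldots,\alpha_n\rangle$ vanishes. Conversely, if $0\in\langle\alpha_1,\ldots,\alpha_n\rangle$, pick a defining set $(c_{i,j})$ whose value satisfies $\sum_{h=1}^{n-1}c_{1,h}\cdot c_{h+1,n}=\partial^1(b)$ for some $b\in\mathfrak{C}^1$; replacing $c_{1,n}$ by $b$ changes neither the relations \eqref{eq:defset} nor the value, and now the $(1,n+1)$ homomorphism relation holds as well, so $\rho_{a,b}:=(-1)^{b-1-a}c_{a,b-1}$ (this time including the $(1,n+1)$ entry) defines a continuous homomorphism $\rho\colon G\to\dbU_{n+1}$ with $\rho_{i,i+1}=\alpha_i$.

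I do not expect a genuine obstacle: the whole argument is a translation, and the only things requiring care are the index shift $j\mapsto j+1$ between the $n$-tuple $\alpha_1,\ldots,\alpha_n$ and the $(n+1)\times(n+1)$ matrices, the sign factors $(-1)^{j-i}$ (which could instead be absorbed by rescaling the $c_{i,j}$), the (immediate) verification that the cochains produced are continuous, and above all keeping straight the special role of the $(1,n+1)$ entry: a homomorphism all the way into $\dbU_{n+1}$ satisfies exactly one more cocycle identity than a homomorphism into $\bar\dbU_{n+1}$, and that extra identity is precisely what forces the value of the corresponding defining set to be a coboundary.
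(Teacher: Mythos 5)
Your proof is correct: the sign bookkeeping $c_{i,j}=(-1)^{j-i}\rho_{i,j+1}$ does convert the homomorphism relations $\partial^1(\rho_{a,b})=-\sum_{a<c<b}\rho_{a,c}\cdot\rho_{c,b}$ into the defining-set relations \eqref{eq:defset}, and the special role of the $(1,n+1)$-entry is handled exactly right. The paper itself gives no proof of this proposition --- it is quoted from \cite[Lemma~9.3]{eq:kummer} and \cite[\S~8]{ido:Massey} --- and your argument is precisely the standard Dwyer dictionary underlying those references.
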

 
By Proposition~\ref{prop:Massey cup}--(i), in order to check that a pro-$p$ group satisfies the $n$-Massey vanishing property for some $n\geq2$, it suffices to verify that every defined $n$-fold Massey product associated to a sequence of {\sl non-trivial } cohomology elements of degree 1 vanishes. 
Analogously, we use Proposition~\ref{prop:masse unip} to show that, in order to check that a pro-$p$ group satisfies the strong $n$-Massey vanishing property, it suffices to verify that every sequence of length at most $n$ of {\sl non-trivial} cohomology elements of degree 1 whose cup-products satisfy the triviality condition \eqref{eq:cup 0} yields a Massey product containing 0.
 
 \begin{prop}\label{prop:trivial strong}
Given $N>2$, a pro-$p$ group $G$ satisfies the strong $n$-Massey vanishing property for every $3\leq n\leq N$ if, and only if,  for every $3\leq n\leq N$, every sequence $\alpha_1,\ldots,\alpha_n$ of non-trivial elements of $\rmH^1(G,\F_p)$ satisfying the triviality condition \eqref{eq:cup 0} yields an $n$-fold Massey product containing $0$. 
 \end{prop}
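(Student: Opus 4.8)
The forward implication is immediate: for a fixed $n$, the strong $n$-Massey vanishing property asserts precisely that $0\in\langle\alpha_1,\dots,\alpha_n\rangle$ for \emph{every} sequence in $\rmH^1(G,\F_p)$ satisfying \eqref{eq:cup 0}, hence in particular for those whose entries are all non-trivial. So the plan is to prove the reverse implication, and the idea is to deduce the general case of a sequence satisfying \eqref{eq:cup 0} from the ``all non-trivial'' case supplied by the hypothesis, passing through Proposition~\ref{prop:masse unip}(ii): a vanishing cohomology class inside the sequence will let us assemble the required homomorphism $G\to\dbU_{n+1}$ as a block-diagonal matrix.

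In detail, I would fix $n\in\{3,\dots,N\}$ and a sequence $\alpha_1,\dots,\alpha_n$ in $\rmH^1(G,\F_p)$ with $\alpha_i\smallsmile\alpha_{i+1}=0$ for all $i$. Let $1\le k_1<\dots<k_s\le n$ be the positions at which the sequence is zero, and put $k_0=0$, $k_{s+1}=n+1$. These positions cut the sequence into the runs $R_j=(\alpha_{k_j+1},\dots,\alpha_{k_{j+1}-1})$ for $j=0,\dots,s$, each a (possibly empty) string of non-trivial classes of length $m_j:=k_{j+1}-k_j-1\le n\le N$ whose consecutive cup products all vanish, being among the relations \eqref{eq:cup 0}. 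The first step is to realize every run, i.e.\ to produce a continuous homomorphism $\rho^{(j)}\colon G\to\dbU_{m_j+1}$ with $(\rho^{(j)})_{i,i+1}=\alpha_{k_j+i}$ for $i=1,\dots,m_j$: for $m_j=0$ take the trivial map to $\dbU_1$; for $m_j=1$ take $g\mapsto I_2+\alpha_{k_j+1}(g)E_{1,2}$; for $m_j=2$ use $\langle\alpha_{k_j+1},\alpha_{k_j+2}\rangle=\{\alpha_{k_j+1}\smallsmile\alpha_{k_j+2}\}=\{0\}$ (by \eqref{eq:2Massey cup}) together with Proposition~\ref{prop:masse unip}(ii); and for $m_j\ge3$ note that $R_j$ is a sequence of non-trivial classes of admissible length satisfying \eqref{eq:cup 0}, so $0\in\langle\alpha_{k_j+1},\dots,\alpha_{k_{j+1}-1}\rangle$ by hypothesis, and conclude again with Proposition~\ref{prop:masse unip}(ii).

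The second step is to glue these into $\rho\colon G\to\dbU_{n+1}$, $g\mapsto\mathrm{diag}\bigl(\rho^{(0)}(g),\dots,\rho^{(s)}(g)\bigr)$. The block sizes sum to $\sum_{j=0}^s(m_j+1)=k_{s+1}-k_0=n+1$, so $\rho$ is a well-defined continuous homomorphism; its $(i,i+1)$-entry equals the corresponding entry of $\rho^{(j)}$, namely $\alpha_i$, whenever $k_j<i<k_{j+1}$, and equals $0=\alpha_{k_j}$ at each of the $s$ block boundaries. Hence $\rho_{i,i+1}=\alpha_i$ for every $i$, and Proposition~\ref{prop:masse unip}(ii) gives $0\in\langle\alpha_1,\dots,\alpha_n\rangle$. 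Since $n$ and the sequence were arbitrary, this is exactly the strong $n$-Massey vanishing property for all $3\le n\le N$; note that when $s=0$ the single run \emph{is} the whole sequence, so that case is literally the hypothesis and no induction on $n$ is required.

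I do not anticipate a real obstacle; the statement is in essence a reduction to sequences with no trivial entries. The only points that need care are: verifying that each run inherits condition \eqref{eq:cup 0} and has length at most $N$, so that the hypothesis genuinely applies; the index bookkeeping showing the super-diagonal of the block-diagonal matrix reproduces $\alpha_1,\dots,\alpha_n$, with the block boundaries sitting exactly at the zero positions; and the short runs $m_j\le1$, which lie outside the range $n\ge2$ of Proposition~\ref{prop:masse unip} and must be realized by hand (trivially). A slightly longer alternative would peel off one zero entry at a time, by induction on $n$, writing the realizing homomorphism as a $2\times2$ block-diagonal matrix at each stage; splitting at all zeros simultaneously, as above, avoids the induction.
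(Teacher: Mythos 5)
Your proof is correct and follows essentially the same route as the paper's: split the sequence at its zero entries, realize each run of non-trivial classes by a homomorphism to the appropriate $\dbU_{m_j+1}$ using the hypothesis together with Proposition~\ref{prop:masse unip}--(ii), and assemble a block-diagonal homomorphism $G\to\dbU_{n+1}$ whose superdiagonal recovers $\alpha_1,\ldots,\alpha_n$. Your explicit treatment of the short runs ($m_j\le 2$, which fall outside the range of the hypothesis and are handled via the identity block, the obvious map to $\dbU_2$, or the vanishing cup-product in \eqref{eq:2Massey cup}) is if anything slightly more careful than the paper's bookkeeping, but it is the same argument.
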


 \begin{proof}
  If $G$ satisfies the strong $n$-Massey vanishing property, then obviously $\langle\alpha_1,\ldots,\alpha_n\rangle$ vanishes for every sequence $\alpha_1,\ldots,\alpha_n$ with $\alpha_1,\ldots,\alpha_n\neq0$ satisfying the triviality condition \eqref{eq:cup 0}.
  
 So assume that for every sequence $\alpha_1,\ldots,\alpha_n$, with $\alpha_i\neq0$ for every $i=1,\ldots,n$ and satisfying the triviality condition \eqref{eq:cup 0}, one has $0\in\langle\alpha_1,\ldots,\alpha_n\rangle$.
 Now pick an arbitrary sequence $\alpha_1,\ldots,\alpha_m$ with $\alpha_i\in\rmH^1(G,\F_p)$ and $3\leq m\leq N$ such that $\alpha_i\smallsmile\alpha_{i+1}=0$ for every $i=1,\ldots,m-1$.
 Let $$0=m_1<m_2<\ldots<m_r<m_{r+1}=m$$
 be positive integers such that: either
 $$\alpha_1=\alpha_{m_1+1},\ldots,\alpha_{m_2}\neq0,\qquad \alpha_{m_2+1},\ldots,\alpha_{m_3}=0,\qquad \alpha_{m_3+1},\ldots,\alpha_{m_4}\neq0,\qquad\ldots$$
 and so on, if $\alpha_1\neq0$; or conversely
 $$\alpha_1=\alpha_{m_1+1},\ldots,\alpha_{m_2}=0,\qquad \alpha_{m_2+1},\ldots,\alpha_{m_3}\neq0,\qquad \alpha_{m_3+1},\ldots,\alpha_{m_4}=0,\qquad\ldots$$
 and so on, if $\alpha_1=0$.
For every $j=1,\ldots,r$ put $n_j=m_{j+1}-m_j$.
By hypothesis, for every $j$ such that $\alpha_{m_j+1},\ldots,\alpha_{m_{j+1}}\neq0$, the $n_j$-fold Massey product $\langle\alpha_{m_j+1},\ldots,\alpha_{m_{j+1}}\rangle$ vanishes.
 Hence, by Proposition~\ref{prop:masse unip}--(ii) there exists a homomorphism $\rho_j\colon G\to\dbU_{n_j+1}$ such that $(\rho_{j})_{i,i+1}=\alpha_{m_j+1}$ for every $i=1,\ldots,n_j$.
 On the other hand, if $j$ is such that $\alpha_{m_j+1},\ldots,\alpha_{m_{j+1}}=0$ and $n_j>1$, then we set $\rho_{j}\colon G\to\dbU_{n_j-1}$ to be the homomorphism constantly equal to $I_{n_j-1}$.
 
Thus, we may define blockwise a homomorphisms $\rho\colon G\to\dbU_{m+1}$, where
\[
 \rho=\left(\begin{array}{cccc} \rho_{1} &&&0 \\ &\rho_{2}&& \\ &&\ddots& \\ 0&&&\rho_{r}
            \end{array} \right),\]
where we omit $\rho_{j}$ if $\alpha_{m_j+1}=0$, $n_j=1$, and $j\neq 1,r$.
For example, if $\alpha_1\neq0$ and $n_2=1$ then one has
\[
 \rho={\tiny \left(\begin{array}{@{}c|cc@{}} 
 \rho_1\left\{\begin{array}{ccc}   \ddots & \ddots & \\ &1&\alpha_{m_2} \\ &&1      \end{array} \right. &
\begin{array}{ccccc}  &&&& \\ &&&& \\ \alpha_{m_2+1}&&&&      \end{array}  & \\ \hline
  & \left.\begin{array}{cc}   1&\alpha_{m_3+1} \\ &\ddots  \end{array}\right\}\rho_3 & \\ 
  & &\ddots \end{array} \right)},
\]
where $m_2+1=m_3$ and $\alpha_{m_2+1}=0$.
Then $$\rho_{i,i+1}=(\rho_{j})_{i-m_j,i-m_j+1}=\alpha_i,\qquad\text{if }m_j<i\leq m_{j+1},\:\alpha_{m_j+1}\neq0,$$
and $\rho_{i,i+1}=0=\alpha_i$ otherwise.
Therefore, Proposition~\ref{prop:masse unip}--(ii) implies that the $m$-fold Massey product $\langle\alpha_1,\ldots,\alpha_m\rangle$ vanishes.
 \end{proof}

 
 \subsection{Commutators of upper unitriangular matrices}\label{ssec:unipot}
 Let $L=\bigoplus_{k\geq1}L_k$ be a graded Lie algebra over $\F_p$.
 Suppose that $L_k=0$ for $k>n$, for some positive $n$, and that for $1\leq k\leq n$ every subspace $L_k$ has dimension $n+1-k$.
 Suppose further that each non-trivial subspace $L_k$ has a basis 
 $$\left\{\:\eue_{1,1+k},\:\eue_{2,2+k},\:\ldots,\:\eue_{n+1-k,n+1}\:\right\}\subseteq L_k,$$ whose elements satisfy
 \begin{equation}\label{eq:liebrackets}
   [\eue_{i,j},\eue_{i',j'}]=\begin{cases}
                             \eue_{i,j'}, & \text{if }j=i',\\ -\eue_{i',j}, & \text{if }i=j',\\ 0,&\text{otherwise}
                            \end{cases} \end{equation}
--- here $[\textvisiblespace,\textvisiblespace]$ denotes the Lie brackets of $L$ --- for every $1\leq i,i'\leq n$ and $2\leq j,j'\leq n+1 $ such that $i<j$ and $i'<j'$.
 Then one has the following.
 
 \begin{lem}\label{lem:Liealg}
  Let $L=\bigoplus_{k\geq 1}L_k$ be a graded Lie $\F_p$-algebra as above, and let $a$ be an element of $L_1$ such that 
  $$a=\sum_{i=1}^na_i\eue_{i,i+1},\qquad a_1,\ldots,a_n\in \F_p$$
satisfying $a_1a_2\cdots a_{n+1-k'}\neq0$ or $a_{k'}a_{k'+1}\cdots a_n\neq0$ for some $k'$, $2\leq k'\leq n$.
  Then for every $c\in L_{k'}$ there exists $b\in L_{{k'}-1}$ such that $[b,a]=c$.
 \end{lem}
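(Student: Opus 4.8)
\medskip
\noindent\emph{Proof proposal.}

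The plan is to reduce the statement to an elementary "staircase" linear system. First I would observe that the relations \eqref{eq:liebrackets} are exactly those satisfied by the elementary matrices $E_{i,j}$ in the Lie algebra of strictly upper-triangular $(n+1)\times(n+1)$ matrices over $\F_p$ (equivalently, I will just manipulate \eqref{eq:liebrackets} directly). The map under scrutiny is the right adjoint $b\mapsto[b,a]$ from $L_{k'-1}$ to $L_{k'}$, and the claim is precisely that it is surjective. Since $2\le k'\le n$, both $L_{k'-1}$ and $L_{k'}$ are non-zero, with $\dim L_{k'-1}=n+2-k'$ and $\dim L_{k'}=n+1-k'$, so the source has exactly one more dimension than the target.

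Next I would compute this map in the given bases. Writing $b=\sum_{i=1}^{n+2-k'}b_i\eue_{i,i+k'-1}$ and expanding $[b,a]$ via \eqref{eq:liebrackets}, the only brackets that survive are $[\eue_{i,i+k'-1},\eue_{i+k'-1,i+k'}]=\eue_{i,i+k'}$ and $[\eue_{i,i+k'-1},\eue_{i-1,i}]=-\eue_{i-1,i+k'-1}$; collecting terms (all boundary contributions with out-of-range indices vanishing) gives
\[
[b,a]=\sum_{i=1}^{n+1-k'}\bigl(a_{i+k'-1}b_i-a_ib_{i+1}\bigr)\,\eue_{i,i+k'}.
\]
Hence, for a prescribed $c=\sum_{i=1}^{n+1-k'}c_i\eue_{i,i+k'}\in L_{k'}$, solving $[b,a]=c$ is the same as solving the system
\[
a_{i+k'-1}b_i-a_ib_{i+1}=c_i\qquad(i=1,\dots,n+1-k')
\]
for $b_1,\dots,b_{n+2-k'}\in\F_p$.

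Finally I would close by treating the two hypotheses separately. If $a_1a_2\cdots a_{n+1-k'}\neq 0$, then every $a_i$ with $1\le i\le n+1-k'$ is invertible in $\F_p$, so the $i$-th equation expresses $b_{i+1}$ in terms of $b_i$; fixing $b_1$ arbitrarily (say $b_1=0$) and running the recursion upward determines $b_2,\dots,b_{n+2-k'}$ and yields the desired $b$. If instead $a_{k'}a_{k'+1}\cdots a_n\neq 0$, then—since $i+k'-1$ runs over $k',\dots,n$ as $i$ runs over $1,\dots,n+1-k'$—every coefficient $a_{i+k'-1}$ is invertible, so the $i$-th equation expresses $b_i$ in terms of $b_{i+1}$; fixing $b_{n+2-k'}$ arbitrarily and running the recursion downward determines $b_{n+1-k'},\dots,b_1$. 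Either way a solution $b\in L_{k'-1}$ exists, which is the assertion.

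The computation is entirely routine; the only points that demand care are the index bookkeeping when expanding $[b,a]$ (checking that the two boundary equations $i=1$ and $i=n+1-k'$ fit the same pattern, and that no spurious component of $L_{k'}$ is produced) and, above all, running the recursion in the direction dictated by whichever of the two non-vanishing hypotheses is in force—this is the one genuine, if small, obstacle, since solving the system in the "wrong" direction would require dividing by scalars not known to be non-zero.
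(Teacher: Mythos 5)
Your proposal is correct and follows essentially the same route as the paper: both expand $[b,a]$ in the given bases to arrive at the identical bidiagonal system $a_{i+k'-1}b_i-a_ib_{i+1}=c_i$. The only (cosmetic) difference is that you solve it by an explicit forward or backward recursion depending on which hypothesis holds, whereas the paper simply notes that either hypothesis forces the $(n+1-k')\times(n+2-k')$ coefficient matrix to have full rank.
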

 
 \begin{proof}
  Write $c=\sum_{l=1}^{n+1-{k'}}c_l\eue_{l,l+k}$, with $c_l\in \F_p$, and
  \[
   b=\sum_{j=1}^{n+2-{k'}}b_j\eue_{j,j+{k'}-1},\qquad b_j\in \F_p
  \]
for an arbitrary element $b\in L_{{k'}-1}$.
Then applying \eqref{eq:liebrackets}  yields
\[\begin{split}
   [b,a]&=\sum_{j=1}^{n+2-{k'}}\sum_{i=1}^{n}b_ja_i[\eue_{j,j+{k'}-1},\eue_{i,i+1}]\\
   &=\sum_{j=1}^{n+1-{k'}}b_ja_{j+{k'}-1}\eue_{j,j+{k'}}-\sum_{j=2}^{n+2-{k'}}b_ja_{j-1}\eue_{j-1,j+{k'}-1}\\
   &=\sum_{j=1}^{n+1-{k'}}\left(b_ja_{j+{k'}-1}-b_{j+1}a_j\right)\eue_{j,j+{k'}}.
  \end{split}\]
Therefore, $[b,a]=c$ if, and only if, the system
\[
\left( \begin{array}{cccccc} a_{k'} & -a_1&0&\cdots& 0\\
 & a_{{k'}+1} & -a_2  &\cdots &0\\
&& \ddots & \ddots &\vdots \\
&&&a_n& -a_{n+1-{k'}}\end{array}\right)\cdot 
\left( \begin{array}{c} b_1 \\ b_2 \\ \vdots \\ b_{n+1-{k'}}\\ b_{n+2-{k'}}\end{array}\right)
 =\left( \begin{array}{c} c_1 \\ c_2 \\ \vdots \\ c_{n+1-{k'}}\end{array}\right)
\]
--- where $b_1,\ldots,b_{n+2-{k'}}$ are the indeterminates --- is solvable.
The condition on the coefficients $a_1,\ldots,a_n$ ensures that rank of the matrix of coefficients is $n+1-{k'}$; so the system is solvable, yielding a suitable $b\in L_{{k'}-1}$. 
 \end{proof}
 
 Given an arbitrary group $U$, and two elements $g,h\in U$, we write ${}^gh=ghg^{-1}$, and $[g,h]={}^gh\cdot h^{-1}=ghg^{-1}h^{-1}$.
 Given three elements $g_1,g_2,h$ of $U$, one has the identities
 \begin{equation}\label{eq:comm0}
 \begin{split}
    [g_1g_2,h] &= \left[g_1,[g_2,h]\right][g_2,h][g_1,h],\\ [h,g_1g_2]&=[h,g_1]\left[g_1,[h,g_2]\right][h,g_2].
 \end{split}
 \end{equation}
Let $(U_{(k)})_{k\geq1}$ denote the descending central series of $U$, i.e.,
$$U_{(1)}=U\qquad\text{and}\qquad U_{(k+1)}=[U_{(k)},U].$$

 Within this subsection, we fix an integer $n\geq 2$, and for simplicity we write $\dbU$ instead of $\dbU_{n+1}$, and $I$ instead of $I_{n+1}$.
 The following properties of $\dbU$ are well-known (cf., e.g., \cite[Thm.~1.5]{matrix1}, \cite[\S~2]{idomatrix}, and \cite[\S~4]{pal:Massey}).
For every $k\geq1$, the $k$-th term $\dbU_{(k)}$ of the descending central sequence of $\dbU$ is the subgroup
   \[
     \dbU_{(k)} = \left\{\:I+\sum_{j-i\geq k}a_{ij}E_{ij}\:\mid\:a_{ij}\in\F_p\:\right\}.
   \]
In particular, $\dbU_{(n)}$ is the subgroup $Z$ of $\dbU$ defined in \eqref{eq:zen U}, while $\dbU_{(k)}=0$ for $k>n$.
Moreover, every quotient $\dbU_{(k)}/\dbU_{(k+1)}$ is a $p$-elementary abelian group.
Altogether, the graded $\F_p$-vector space
\[
 L(\dbU)=\bigoplus_{k\geq1} L(\dbU)_{k},\qquad L(\dbU)_{k}=\dbU_{(k)}/\dbU_{(k+1)}
\]
is a graded Lie algebra over $\F_p$, endowed with the Lie brackets induced by commutators.
Moreover, for every $1\leq k\leq n$ one has $\dim(L(\dbU)_{k})=n+1-k$, and $L(\dbU)_{k}$ comes endowed with a basis 
$\{\eue_{1,1+k},\ldots,\eue_{n+1-k,n+1}\}$ with 
$$\eue_{i,i+k}=(I+E_{i,i+k})\dbU_{k+1}\in L(\dbU)_{k},\qquad \text{with }1\leq i\leq n+1-k.$$
 Straightforward computations show that the elements $\eue_{i,i+k}$ above satisfy \eqref{eq:liebrackets}
--- cf., e.g., \cite[Lemma~4.3]{pal:Massey}.
From this we deduce the following.

\begin{prop}\label{prop:commutators}
Let $A\in\dbU$ be the matrix with coset $a\in L(\dbU)_1$, $a=\sum_{i=1}^na_i\eue_{i,i+1}$, with $a_i\neq0$ for all $i=1,\ldots,n$, and let $k$ be a positive integer such that $3\leq k\leq n$.
For every $C\in\dbU_{(k)}$ there exists a matrix $B\in\dbU_{(k-1)}$ such that 
\begin{equation}\label{eq:equality comm}
 [B,A]=C.
\end{equation}
 \end{prop}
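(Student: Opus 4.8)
The plan is to lift the Lie-algebra computation of Lemma~\ref{lem:Liealg} to the group $\dbU=\dbU_{n+1}$ by successive approximation along its descending central series. First I would record the two ingredients. Since $a_i\neq0$ for every $i$, for each $k'$ with $2\le k'\le n$ both products $a_1\cdots a_{n+1-k'}$ and $a_{k'}\cdots a_n$ are nonzero, so Lemma~\ref{lem:Liealg} --- via the identification, recalled before the statement, of $L(\dbU)$ with the graded Lie algebra whose pieces $L(\dbU)_k$ have bases $\{\eue_{1,1+k},\ldots,\eue_{n+1-k,n+1}\}$ subject to \eqref{eq:liebrackets} --- shows that the $\F_p$-linear map $[\,\cdot\,,a]\colon L(\dbU)_{k'-1}\to L(\dbU)_{k'}$ induced by commutators is \emph{surjective}. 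Second, I would use that $[\dbU_{(i)},\dbU_{(j)}]\subseteq\dbU_{(i+j)}$, with induced bilinear map $L(\dbU)_i\times L(\dbU)_j\to L(\dbU)_{i+j}$ equal to the Lie bracket; in particular $\dbU_{(m)}/\dbU_{(m+1)}$ is central in $\dbU/\dbU_{(m+1)}$ for every $m$, and $\dbU_{(m)}=\{I\}$ for $m>n$.

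Then I would construct, by induction on $j\ge1$, a matrix $B_j\in\dbU_{(k-1)}$ with $C\inv[B_j,A]\in\dbU_{(k+j)}$. For $j=1$: by the surjectivity above applied with $k'=k$, there is a class in $L(\dbU)_{k-1}$ bracketing with $a$ to the image $\bar C$ of $C$ in $L(\dbU)_k=\dbU_{(k)}/\dbU_{(k+1)}$; lifting it to $B_1\in\dbU_{(k-1)}$, the matrix $[B_1,A]\in\dbU_{(k)}$ has image $\bar C$ in $L(\dbU)_k$, whence $C\inv[B_1,A]\in\dbU_{(k+1)}$. For the inductive step, put $D=C\inv[B_j,A]\in\dbU_{(k+j)}$; if $k+j>n$ then $D=I$ and nothing is left to do, so assume $k+j\le n$. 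By surjectivity with $k'=k+j$, choose $E\in\dbU_{(k+j-1)}$ whose class in $L(\dbU)_{k+j-1}$ brackets with $a$ to minus the image $\bar D$ of $D$ in $L(\dbU)_{k+j}$, and set $B_{j+1}=B_jE$. Expanding $[B_jE,A]$ by the first identity in \eqref{eq:comm0} and reducing modulo $\dbU_{(k+j+1)}$ --- where $\dbU_{(k+j)}$ is central, so that the higher-weight corrections coming from $[B_j,[E,A]]$ and from conjugating by $C\inv$ all vanish --- one gets that the image of $C\inv[B_{j+1},A]$ in $L(\dbU)_{k+j}$ equals $\bar D$ plus the bracket of the class of $E$ with $a$, that is $\bar D-\bar D=0$. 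Hence $C\inv[B_{j+1},A]\in\dbU_{(k+j+1)}$, while $E\in\dbU_{(k+j-1)}\subseteq\dbU_{(k-1)}$ gives $B_{j+1}\in\dbU_{(k-1)}$.

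Taking $j=n-k+1$ then yields $C\inv[B_j,A]\in\dbU_{(n+1)}=\{I\}$, so $B:=B_{n-k+1}\in\dbU_{(k-1)}$ satisfies $[B,A]=C$, which is \eqref{eq:equality comm}.

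I expect the only delicate point to be the bookkeeping in the inductive step: one must check that passing from $[B_j,A]$ to $[B_jE,A]$ introduces, beyond the factor $[E,A]$, only terms of weight $\ge k+j+1$, so that the associated graded of the map $X\mapsto[X,A]$ on the relevant pieces is exactly $[\,\cdot\,,a]$ and Lemma~\ref{lem:Liealg} can be applied degree by degree. This is precisely where the structure of $\dbU_{n+1}$ recalled above (and in \cite{matrix1,idomatrix,pal:Massey}) is used, together with the hypothesis $3\le k\le n$, which keeps every index $k'$ fed into Lemma~\ref{lem:Liealg} within the admissible range $2\le k'\le n$.
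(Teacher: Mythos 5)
Your proof is correct and follows essentially the same route as the paper: both arguments lift Lemma~\ref{lem:Liealg} to the group by successive approximation along the descending central series, correcting the error term one graded piece at a time and using the commutator identities \eqref{eq:comm0} together with $[\dbU_{(i)},\dbU_{(j)}]\subseteq\dbU_{(i+j)}$ to discard the higher-weight cross terms. The only differences are cosmetic (you append the correction factor on the right and solve for the negative of the error class, whereas the paper multiplies new factors on the left and targets the error class directly).
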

 
 \begin{proof}
 For $l\geq1$ we produce matrices $B_1,\ldots,B_l\in\dbU_{(k-1)}$ satisfying
\begin{equation}\label{eq:Bl mod Uk}
 [B_l\cdots B_2 B_1,A]\equiv C\mod \dbU_{(k+l)}.
\end{equation} 
Since $k+l\geq n+1$ for $l$ sufficiently large, one has $\dbU_{(k+l)}=\{1\}$, and thus from \eqref{eq:Bl mod Uk} one obtains $[B_l\cdots B_1,A]=C$.
So we may put $B=B_l\cdots B_1$, so that $B$ satisfies \eqref{eq:equality comm}.

Observe that the coset $a\in L(\dbU)_1$ of $A$ satisfies the hypothesis of Lemma~\ref{lem:Liealg}.
Let $c\in L(\dbU)_{k}$ be the coset of $C$.
Lemma~\ref{lem:Liealg} yields an element $b\in L(\dbU)_{k-1}$ such that $[b,a]=c$.
Therefore, any matrix $B_1\in\dbU_{(k-1)}$ with coset $b$ satisfies \eqref{eq:Bl mod Uk} with $l=1$.
Now suppose that $l\geq1$, and that we have found $l$ matrices $B_1,\ldots, B_l\in\dbU_{(k-1)}$ satisfying \eqref{eq:Bl mod Uk}. 
Namely, one has
$$C_l:=[B_{l}\cdots B_2 B_1,A]^{-1}C\in\dbU_{(k+l)}.$$
Then again Lemma~\ref{lem:Liealg} yields $B_{l+1}\in \dbU_{(k+l-1)}$ --- hence $B_{l+1}$ lies in $\dbU_{(k-1)}$, too --- such that 
$[B_{l+1},A]\equiv C_l\bmod \dbU_{(k+l+1)}$.
The commutator identities \eqref{eq:equality comm} imply 
\[\begin{split}
[B_{l+1}\cdot (B_{l}\cdots B_2 B_1),A]&=
[B_{l+1},[B_{l}\cdots B_2 B_1,A]]\cdot[B_{l}\cdots B_2 B_1,A]\cdot[B_{l+1},A]  \\
&\equiv [B_{l+1},[B_{l}\cdots B_2 B_1,A]]\cdot C\mod\dbU_{(k+l+1)}\\
&\equiv C\mod\dbU_{(k+l+1)},
\end{split}\]
as $[B_{l+1},[B_{l}\cdots B_2 B_1,A]]\in\dbU_{(k+l-1)+k}$ and $2k+l-1\geq k+l+1$.
Altogether, $B_1,\ldots,B_{l+1}$ lie in $\dbU_{(k-1)}$, and they satisfy \eqref{eq:Bl mod Uk} (with $l+1$ instead of $l$).
 \end{proof}

 \section{Oriented pro-$p$ groups}
 
 Recall that, given a pro-$p$ group $G$, the Frattini subgroup $\Phi(G)$ of $G$ is the subgroup $G^p\cdot\mathrm{cl}([G,G])$, where the latter factor is the closure of the commutator subgroup $[G,G]$ with respect to the topology of $G$.
 By \eqref{eq:H1}, one has an isomorphism of $\F_p$-vector spaces
\begin{equation}\label{eq:H1 dual}
 \rmH^1(G,\F_p)=(G/\Phi(G))^\ast,
\end{equation}
where $\textvisiblespace^\ast$ denotes the $\F_p$-dual space (c.f., e.g., \cite[Ch.~I, \S~4.2, p.~29]{serre:galc}).
 
 \subsection{Orientations}\label{ssec:or}
 Recall that $1+p\Z_p$ denotes the multiplicative group of principal units of the ring of $p$-adic integers $\Z_p$, i.e.,
 \[
  1+p\Z_p=\{\:1+p\lambda\:\mid\:\lambda\in\Z_p\:\}.
 \]
If $p=2$ then $1+2\Z_2=\{\pm1\}\times(1+4\Z_2)$, which is isomorphic to $(\Z/2)\times \Z_2$ as an abelian pro-2 group; while $1+p\Z_p$ is a free cyclic pro-$p$ group if $p\neq2$.

Let $G$ be a pro-$p$ group.
A homomorphism of pro-$p$ groups $\theta\colon G\to1+p\Z_p$ is called an {\sl orientation}, and the pair $(G,\theta)$ is called an {\sl oriented pro-$p$ group} (cf. \cite{qw:cyc}; oriented pro-$p$ groups were introduced by I.~Efrat in \cite{efrat:small}, with the name ``cyclotomic pro-$p$ pairs'').
An orientation $\theta\colon G\to1+p\Z_p$ of a pro-$p$ group $G$ is said to be {\sl torsion-free} if $p\neq2$, or if $p=2$ and $\Img(\theta)\subseteq1+4\Z_2$.

If $(G,\theta)$ and $(H,\tau)$ are two oriented pro-$p$ groups, a homomorphism of oriented pro-$p$ groups 
$$\phi\colon (G,\theta)\longrightarrow(H,\tau)$$ is a homomorphism of pro-$p$ groups $\phi\colon G\to H$ such that $\theta=\tau\circ\phi$.
 
 \begin{exam}\label{exam:cyclo char}\rm
The maximal pro-$p$ Galois group $G_{\K}(p)$ of a field $\K$ containing a root of 1 of order $p$ comes endowed naturally with an orientation: namely, the {\sl $p$-cyclotomic character} $\theta_{\K}\colon G_{\K}(p)\to1+p\Z_p$, satisfying 
\[
 g(\zeta)=\zeta^{\theta_{\K}(g)}\qquad\text{for every }g\in G_{\K}(p),
\]
for any root $\zeta\in\K(p)$ of 1 of order a power of $p$ (cf. \cite[\S~4]{eq:kummer}).
The image of $\theta_{\K}$ is $1+p^f\Z_p$, where $f$ is the maximal positive integer such that $\K$ contains the roots of 1 of order $p^f$ --- if such a number does not exists, i.e., if $\K$ contains all roots of 1 of $p$-power order, then $\Img(\theta_{\K})=\{1\}$, and one sets $f=\infty$.
Observe that if $p\neq2$, or if $p=2$ and $\sqrt{-1}\in\K$, then $\theta_{\K}$ is a torsion-free orientation.
\end{exam}

 From now on, given an orientation $\theta\colon G\to1+p\Z_p$ of a pro-$p$ group $G$ the notation $\Img(\theta)=1+p^\infty\Z_p$ will mean that the image of $\theta$ is trivial. 
 
 \begin{exam}\label{exam:Demushkin}\rm
 A {\sl Demushkin group} is a pro-$p$ group $G$ satisfying the following:
\begin{itemize}
 \item[(i)] $\dim(\rmH^1(G,\F_p))<\infty$;
 \item[(ii)] $\rmH^2(G,\F_p)\simeq\F_p$;
 \item[(iii)] the cup-product induces a non-degenerate bilinear form
 \[ \xymatrix{\rmH^1(G,\F_p)\times \rmH^1(G,\F_p)\ar[r] &  \rmH^2(G,\F_p)};  \]
\end{itemize}
cf., e.g., \cite[Def.~3.9.9]{nsw:cohn}.
J-P.~Serre proved that every Demushkin group comes endowed with a canonical orientation $\theta_G\colon G\to1+p\Z_p$ which completes $G$ into an oriented pro-$p$ group (cf. \cite{serre:Demushkin}).
If the canonical orientation $\theta_G$ is torsion-free, then 
\begin{equation}\label{eq:pres Demushkin}
 G=\left\langle\: x_1,\ldots,x_d\:\mid\:x_1^{p^f}[x_1,x_2]\cdots[x_{d-1},x_d]=1\:\right\rangle
\end{equation}
for some even positive integer $d$, and with $f\in\dbN\cup\{\infty\}$ such that $\Img(\theta_G)=1+p^f\Z_p$ (cf., e.g., \cite[Thm.~3.9.11]{nsw:cohn}), and $\theta_G(x_2)=1+p^f$ and $\theta_G(x_h)=1$ for $h\neq2$ (see also \cite[\S~5.3]{qw:cyc}).
\end{exam}

\begin{rem}\label{rem:Demushkin}\rm
If $\K$ is an $\ell$-adic local field, with $\ell$ a prime different to $p$ --- respectively if $\K$ is a $p$-adic local field ---, containing a root of 1 of order $p$, then its maximal pro-$p$ Galois group is a Demushkin group $G$, with 
$$\dim(\rmH^1(G,\F_p))=\begin{cases} 2, \\ [\K:\Q_p]+2 \end{cases}
$$
respectively --- in particular, in the former case (i.e., $\K$ is $\ell$-adic) one has $G\simeq\Z_p\rtimes\Z_p$ --- (cf. \cite[Prop.~7.5.9, Thm.~7.5.11]{nsw:cohn}).
In this case the canonical orientation $\theta_G$ coincides with the pro-$p$ cyclotomic character $\theta_{\K}$ (see Example~\ref{ex:kummer}--(b) below).
Also, $\Z/2$ is the maximal pro-$2$ Galois group of $\dbR$.
It is still an open problem to determine whether {\sl any other} Demushkin group occurs as the maximal pro-$p$ Galois group of a field containing a root of 1 of order $p$: for example, the simplest example for which this is not known is the Demushkin pro-2 group
\[
 G=\langle\:x_1,x_2,x_3\:\mid\:x_1^2[x_2,x_3]=1\:\rangle
\]
(cf. \cite[Rem.~5.5]{jacobware}); while the only Demushkin group on 4 generators which is known to be realizable as a
maximal pro-$p$ Galois group is the pro-3 group
\[
  G=\langle\:x_1,x_2,x_3,x_4\:\mid\:x_1^3[x_1,x_2][x_3,x_4]=1\:\rangle,
\]
which occurs as the maximal pro-$3$ Galois group of $\Q_3(\zeta_3)$, where $\zeta_3$ is a root of 1 of order 3 (cf. \cite[p.~254]{Koenigsmann}).
\end{rem}

\subsection{Oriented pro-$p$ groups of elementary type}\label{ssec:etc}

In the family of oriented pro-$p$ groups one has the following two constructions (cf. \cite[\S~3]{efrat:small}).
\begin{itemize}
 \item[(a)] Let $(G_0,\theta)$ be an oriented pro-$p$ group, and let $A$ be a free abelian pro-$p$ group.
 The {\sl semidirect product} $(A\rtimes_\theta G_0,\tilde\theta)$ is the oriented pro-$p$ group where $A\rtimes_\theta G_0$ is the semidirect product of pro-$p$ groups with action $gag^{-1}=a^{\theta(g)}$ for every $g\in G_0$ and $a\in A$, and where 
 $$\tilde\theta\colon A\rtimes_\theta G_0\longrightarrow 1+p\Z_p$$
 is the orientation induced by $\theta$, i.e., $\tilde\theta=\theta\circ\pi$, where $\pi\colon A\rtimes_\theta G_0\to G_0$ is the canonical projection.
 \item[(b)] Let $(G_1,\theta_1),(G_2,\theta_2)$ be two oriented pro-$p$ groups.
 The {\sl free product} $(G_1\ast G_2,\theta)$ is the oriented pro-$p$ group where $G_1\ast G_2$ denote the free pro-$p$ product of the two pro-$p$ groups $G_1,G_2$, while $$\theta\colon G_1\ast G_2\longrightarrow1+p\Z_p$$
 is the orientation induced by the orientations $\theta_1,\theta_2$ via the universal property of the free pro-$p$ product.
\end{itemize}

\begin{defin}\label{defin:ET}\rm
  The family of {\sl oriented pro-$p$ groups of elementary type} is the smallest family of oriented pro-$p$ groups containing
 \begin{itemize}
  \item[(a)] every oriented pro-$p$ group $(F,\theta)$, where $F$ is a finitely generated free pro-$p$ group, and $\theta\colon F\to1+p\Z_p$ is arbitrary,
  \item[(b)] every Demushkin group endowed with its canonical orientation $(G,\theta_G)$ (cf. Example~\ref{exam:Demushkin});
 \end{itemize}
and such that 
\begin{itemize}
 \item[(c)] if $(G_0,\theta)$ is an oriented pro-$p$ group of elementary type, then also the semidirect product $(\Z_p\rtimes_\theta G_0,\tilde\theta)$ is an oriented pro-$p$ group of elementary type,
 \item[(d)]if $(G_1,\theta_1),(G_2,\theta_2)$ are two oriented pro-$p$ groups of elementary type, then also the free product $(G_1\ast G_2,\theta)$ is an oriented pro-$p$ group of elementary type.
\end{itemize}
\end{defin}

\begin{rem}\label{rem:subgroups et}\rm
\begin{itemize}
 \item[(a)] If $(G,\theta)$ is an oriented pro-$p$ group of elementary type, and $H$ is a finitely generated subgroup of $G$, then also the oriented pro-$p$ group $(H,\theta\vert_H)$ is of elementary type (cf., e.g., \cite[Rem.~5.10--(b)]{qw:cyc}).
\item[(b)]  Given an oriented pro-$p$ group of elementary type $(G,\theta)$, there might be another orientation $\tau\colon G\to1+p\Z_p$, $\tau\neq\theta$, such that also $(G,\tau)$ is of elementary type --- e.g., if $G=F$ is a finitely generated free pro-$p$ group.
\end{itemize}
\end{rem}

I.~Efrat's Elementary Type Conjecture asks whether the maximal pro-$p$ Galois group $G_{\K}(p)$ of every field $\K$ containing a root of 1 of order $p$ such that $[\K^\times:(\K^\times)^p]<\infty$ may be obtained in this way.
More precisely, the conjecture states the following (cf. \cite{ido:etc}, see also \cite[Question~4.8]{ido:etc2}, \cite[\S~10]{marshall}, \cite[\S~7.5]{qw:cyc} and \cite[Conj.~4.8]{MPQT}).

\begin{conj}\label{conj:etc}
In Definition~\ref{defin:ET}, replace item~{\rm(b)} with 
\begin{itemize}
 \item[(b')] every oriented pro-$p$ group $(G_{\K}(p),\theta_{\K})$, where $\K$ is a $p$-adic field containing a root of 1 (cf. Remark~\ref{rem:Demushkin}), and also the oriented pro-2 group $(\Z/2,\theta_{\Z/2})$, with $\Img(\theta_{\Z/2})=\{\pm1\}$, if $p=2$.
\end{itemize}
Then the family of oriented pro-$p$ groups $(G_{\K}(p),\theta_{\K})$, where $\K$ is a field containing a root of 1 of order $p$ such that $[\K^\times:(\K^\times)^p]<\infty$, coincides with the family of oriented pro-$p$ groups obtained from Definition~\ref{defin:ET}, with {\rm(b')} instead of item~{\rm(b)}.
\end{conj}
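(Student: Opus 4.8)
This statement is the Elementary Type Conjecture itself, so a complete proof is out of reach; what follows is the natural plan of attack and an honest account of where it stalls. I would split the asserted coincidence into its two inclusions. The inclusion ``elementary type $\Rightarrow$ realizable'' proceeds by structural induction along Definition~\ref{defin:ET} with (b') in place of (b). The two base families are, by fiat, already maximal pro-$p$ Galois groups: the oriented $p$-adic groups of (b') are $(G_{\K}(p),\theta_{\K})$ for $p$-adic $\K$ by construction, so only the free base case (a) needs realizing, and a finitely generated free pro-$p$ group carrying a prescribed orientation occurs as some $(G_{\K}(p),\theta_{\K})$ for a field $\K$ of $p$-cohomological dimension $\le 1$ with the roots of unity arranged to give the desired cyclotomic character. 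For the inductive step I would show that the two operations of \S\ref{ssec:etc} preserve realizability: the semidirect product $(\Z_p\rtimes_\theta G_0,\tilde\theta)$ is realized by passing from $\K_0$ to a $p$-Henselian valued field with residue field $\K_0$, value group $\Z$, and inertia acting through $\theta$ (e.g.\ $\K_0((t))$, where the action on $p$-power roots of unity reproduces $\theta$ exactly as in Example~\ref{exam:cyclo char}), while the free product $(G_1\ast G_2,\theta)$ is realized by a field built from $\K_1,\K_2$ via two independent compatible valuations. Each realization lemma is the standard fare of the valuation-theoretic literature on maximal pro-$p$ Galois groups.

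The converse inclusion --- that every finitely generated $(G_{\K}(p),\theta_{\K})$ is of elementary type --- is the genuinely open content, and I would attack it by induction on $\dim_{\F_p}\rmH^1(G_{\K}(p),\F_p)=\dim_{\F_p}\K^\times/(\K^\times)^p$, which is finite exactly under the hypothesis $[\K^\times:(\K^\times)^p]<\infty$. The engine is the Galois theory of valuations: to $\K$ one attaches its canonical $p$-Henselian valuation $v$. If $v$ is nontrivial with suitably $p$-divisible value group, its decomposition data split $G_{\K}(p)$ as an inertia subgroup (a copy of $\Z_p$ twisted by $\theta_{\K}$) extended by the maximal pro-$p$ Galois group of the residue field $\kappa$; this is precisely the semidirect-product building block, and since $\dim_{\F_p}\kappa^\times/(\kappa^\times)^p$ is strictly smaller, the induction applies to $(G_\kappa(p),\theta_\kappa)$. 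If instead $\K$ carries no usable $p$-Henselian valuation, one is in the $p$-rigid regime, where Ware-- and Koenigsmann-type theorems force $G_{\K}(p)$ to be free, Demushkin, or a nontrivial free product --- again a building block or a free-product reduction feeding the induction.

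The main obstacle is exactly the dichotomy invoked above: there is no general theorem guaranteeing that a finitely generated $G_{\K}(p)$ \emph{either} admits a splitting $p$-Henselian valuation \emph{or} is one of the listed indecomposable blocks. Recovering a valuation from the purely group-cohomological data --- in the spirit of the anabelian and birational work of Bogomolov--Tschinkel, Pop, and Koenigsmann --- is known only under restrictive hypotheses, and chiefly in higher transcendence degree, so the troublesome cases are precisely those where no valuation is yet detectable while $G_{\K}(p)$ is not visibly a building block. The quadraticity of $\bfH^\bullet(G_{\K}(p))$ supplied by the Norm Residue Theorem (the fact that the cup product structure is determined in degrees $1$ and $2$) furnishes strong necessary conditions --- forcing the degree-$2$ relations to look like those of a right-angled or Demushkin algebra --- but it does not by itself produce the geometric decomposition. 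Bridging this gap is what keeps the conjecture open, which is why it is established only for the special field classes underlying Corollary~\ref{cor:fields}, and why Theorem~\ref{thm:main} must take elementary type as a \emph{hypothesis} rather than as a conclusion.
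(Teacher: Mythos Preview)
The statement is a \emph{conjecture}, not a theorem, and the paper does not offer a proof of it. You recognize this immediately and correctly frame your write-up as a discussion of the natural strategy and its obstructions rather than as a purported proof, which is the right call.

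For the one direction the paper does address, your account matches: the paper remarks (just after the conjecture, citing \cite[Rem.~3.4]{efrat:small}) that realizability as a maximal pro-$p$ Galois group is preserved under the two operations, so the ``elementary type $\Rightarrow$ realizable'' inclusion is known. Your structural-induction sketch of this direction is in line with that remark; the Henselian/Laurent-series realization of the semidirect product and the two-valuation realization of the free product are exactly the standard constructions underlying Efrat's observation.

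For the hard direction, the paper does not attempt anything beyond recording the known special cases (Proposition~\ref{prop:etc fields}). Your valuation-theoretic induction plan --- split off a $\Z_p$ via the canonical $p$-Henselian valuation when one exists, and otherwise fall back on rigidity/Ware--Koenigsmann-type structure results --- is indeed the strategy behind the established cases, and your identification of the missing dichotomy (no general theorem that a finitely generated $G_{\K}(p)$ either carries a usable $p$-Henselian valuation or is already a basic block/free product) is an accurate description of where the problem is open. So there is nothing to correct: your proposal is not a proof, but it is an honest and well-informed account of the state of the conjecture, consistent with how the paper treats it.
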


On the one hand, one knows that all oriented pro-$p$ groups constructed as in Conjecture~\ref{conj:etc} occur as maximal pro-$p$ Galois groups (endowed with the pro-$p$ cyclotomic character), as the realizability as maximal pro-$p$ Galois group is preserved by semidirect products and free pro-$p$ products (cf. \cite[Rem.~3.4]{efrat:small}).
On the other hand, one has the following.

\begin{prop}\label{prop:etc fields}
 Let $\K$ be a field satisfying $[\K^\times:(\K^\times)^p]<\infty$ and containing a root of 1 of order $p$.
 Then the oriented pro-$p$ group $(G_{\K}(p),\theta_{\K})$ is of elementary type in the following cases:
\begin{itemize}
  \item[(i)] $\K$ is finite;
\item[(ii)] $\K$ is a PAC field, or an extension of relative transcendence degree 1 of a PAC field;
\item[(iii)] $\K$ is a local field, or an extension of transcendence degree 1 of a local field, with characteristic not $p$;
\item[(iv)] $\K$ is $p$-rigid {\rm (}cf. \cite[p.~722]{ware}{\rm)};
\item[(v)] $\K$ is an algebraic extension of a global field of characteristic not $p$;
\item[(vi)] $\K$ is a valued $p$-Henselian field with residue field $\kappa$, where $(G_{\kappa}(p),\theta_{\kappa})$ is of elementary type.
\item[(vii)] $\K$ is a Pythagorean field, if $p=2$.
 \end{itemize}
\end{prop}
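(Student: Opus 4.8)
The plan is to prove the seven items one at a time, in each case invoking the known description of the maximal pro-$p$ Galois group $G_\K(p)$ and then checking that the resulting oriented pro-$p$ group lies in the family of Definition~\ref{defin:ET} --- which, by construction, is closed under the operations (c) and (d). The point to keep track of throughout is that one needs a statement about \emph{oriented} pro-$p$ groups: it is not enough to identify the isomorphism type of $G_\K(p)$, one must check that the field-theoretic decomposition is compatible with the $p$-cyclotomic character $\theta_\K$ of Example~\ref{exam:cyclo char}, using the behaviour of orientations under free products and semidirect products with $\Z_p$ described in \S~\ref{ssec:etc}.

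First I would dispose of the cases in which $G_\K(p)$ is given explicitly. If $\K$ is finite, then $G_\K(p)\isom\Z_p$ is free pro-$p$ of rank $1$, hence of elementary type by Definition~\ref{defin:ET}(a) for any orientation, so (i) holds. For (iii), if $\K$ is a local field of characteristic not $p$, then by Remark~\ref{rem:Demushkin} $G_\K(p)$ is a Demushkin group whose canonical orientation is $\theta_\K$ --- this covers the $p$-adic case, the $\ell$-adic case ($\ell\neq p$, where $G_\K(p)\isom\Z_p\rtimes\Z_p$), and $\K=\dbR$ (where $G_\K(2)\isom\Z/2$) --- so membership in the elementary type family is immediate from Definition~\ref{defin:ET}(b). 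For $p$-rigid fields and, when $p=2$, Pythagorean fields --- cases (iv) and (vii) --- I would invoke the classical classification of the corresponding maximal pro-$p$ Galois groups (see \cite{ware}, \cite{marshall} and the references therein), according to which each such group is built from copies of $\Z_p$, and, for $p=2$, also $\Z/2$, by iterated free pro-$p$ products and semidirect products with $\Z_p$, carrying $\theta_\K$ as the induced orientation; membership in the elementary type family then follows directly from Definition~\ref{defin:ET}(a)--(d).

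The remaining cases rely on the literature surrounding the Elementary Type Conjecture. If $\K$ is PAC then $G_\K$, hence $G_\K(p)$, is projective, so $G_\K(p)$ is free pro-$p$ and, being finitely generated under the standing hypothesis $[\K^\times:(\K^\times)^p]<\infty$, of elementary type with arbitrary orientation; for a relative-transcendence-degree-$1$ extension of a PAC field --- and likewise for a transcendence-degree-$1$ extension of a local field in (iii) --- I would take the description of $G_\K(p)$ from \cite[Ch.~11]{friedjarden} and \cite{ido:Hasse}, together with the associated structure theory for Galois groups of function fields. Case (v) is Efrat's theorem that the Elementary Type Conjecture holds for algebraic extensions of global fields of characteristic not $p$, \cite{ido:etc2}, in the oriented form recorded in \cite{qw:cyc} and \cite{MPQT}. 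Finally, for (vi) I would use the structure theorem for maximal pro-$p$ Galois groups of $p$-Henselian valued fields: since $[\K^\times:(\K^\times)^p]<\infty$ the $p$-rank $r$ of the value group is finite, and $G_\K(p)$ is obtained from $G_\kappa(p)$ by $r$ successive semidirect products with $\Z_p$, with $\theta_\K$ restricting to $\theta_\kappa$ on the distinguished quotient isomorphic to $G_\kappa(p)$; thus $(G_\K(p),\theta_\K)$ is of elementary type whenever $(G_\kappa(p),\theta_\kappa)$ is, by iterating Definition~\ref{defin:ET}(c).

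The main obstacle, as I see it, is not any of the individual group-theoretic inputs --- these are all on record --- but the bookkeeping of orientations, especially in case (vi) and in the transcendence-degree-$1$ parts of (ii) and (iii): one must ensure that the decomposition/inertia description of $G_\K(p)$ coming from valuation theory yields exactly the cyclotomic character $\theta_\K$ as induced orientation, not merely some orientation compatible with the decomposition. This is where I would lean on the compatibilities established in \S~\ref{ssec:etc} and on Serre's identification (recalled in Remark~\ref{rem:Demushkin}) of the canonical orientation of a Demushkin maximal pro-$p$ Galois group with $\theta_\K$.
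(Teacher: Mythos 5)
Your proposal matches the paper's treatment: the paper gives no self-contained argument either, but delegates each item to the literature (\cite[Thm.~D, Prop.~6.2--6.3]{MPQT} overall, with \cite{EK} and \cite{henselian} for (vi), \cite{ido:etc2} for (v), \cite[\S~3]{cmq:fast} for (iv), Remark~\ref{rem:Demushkin} for the local-field part of (iii), and \cite[Thm.~6.5]{MPQT} for (vii)), exactly as you do, and your insistence on tracking the cyclotomic orientation $\theta_{\K}$ through the free-product and semidirect-product decompositions is precisely the point the paper's framework of oriented pro-$p$ groups is designed to handle. The one adjustment: for a transcendence-degree-$1$ extension of a local field in (iii) the relevant structure theorem is Efrat's \cite{efrat:funfield}, not the PAC-field references \cite{friedjarden} and \cite{ido:Hasse}, which pertain only to case (ii).
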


For Proposition~\ref{prop:etc fields} see \cite[Thm.~D, Prop.~6.2--6.3]{MPQT}.
See also: Remark~\ref{rem:Demushkin} and \cite{efrat:funfield} for item~(iii); \cite[\S~3]{cmq:fast} for item~(iv); \cite{ido:etc2} for item~(v); \cite[\S~1]{EK} for item~(vi) in case $\mathrm{char}(\kappa)\neq p$, and \cite[\S~3]{henselian} for item~(vi) in case $\mathrm{char}(\kappa)= p$; and \cite[Thm.~6.5]{MPQT} for item~(vii).

 
 \subsection{Kummerian oriented pro-$p$ groups}\label{ssec:kummer}
 
 An oriented pro-$p$ group $(G,\theta)$, with $\theta$ a torsion-free orientation, is said to be {\sl $\theta$-abelian} if $\Ker(\theta)$ is a free abelian pro-$p$ group, and there exists a complement $G_0\subseteq G$ to $\Ker(\theta)$ --- thus, $G_0\simeq\Img(\theta)$ ---, and 
 \begin{equation}\label{eq:def thetabelian}
  (G,\theta)\simeq \Ker(\theta)\rtimes_{\theta}(G_0,\theta\vert_{G_0})  
 \end{equation}
 (cf. \cite[\S~1]{cq:bk}).
Equivalently, $(G,\theta)$ is $\theta$-abelian if, and only if, $G$ has a presentation
\begin{equation}\label{eq:thetabelian pres}
  G=\left\langle\:x_0,x_h\:\mid\:h\in J,\:{}^{x_0}x_h=x_h^{\theta(x_0)},\:
 [x_h,x_l]=1\:\forall h,l\in J\:\right\rangle,
\end{equation}
for some set $J$ (cf. \cite[Prop.~3.4]{cq:bk}).
 
 The following notion was introduced in \cite{eq:kummer} (here we use the formulation given in \cite[\S~2]{qw:bogo}, which is the most useful for our purposes).
 
 \begin{defin}\label{defin:kummer}\rm
  An oriented pro-$p$ group $(G,\theta)$, with torsion-free orientation $\theta$, is said to be {\sl Kummerian} if there exists 
  an epimorphism of oriented pro-$p$ groups $$\phi\colon (G,\theta)\longrightarrow(\bar G,\bar\theta)$$ with $(\bar G,\bar\theta)$ a $\bar\theta$-abelian oriented pro-$p$ group, such that $\Ker(\phi)\subseteq\Phi(G)$.
  \end{defin}

By \eqref{eq:H1 dual}, $\Ker(\phi)\subseteq \Phi(G)$ if, and only if, $\phi$ yields an isomorphism
  \[
   \phi^\ast\colon \rmH^1(\bar G,\F_p)\overset{\sim}{\longrightarrow} \rmH^1(G,\F_p).
  \]

  Now let $\K$ be a field containing a root of 1 of order $p$ (and also $\sqrt{-1}$ if $p=2$), and let 
   $\sqrt[p^\infty]{\K}$ be the compositum of all extensions $\K(\sqrt[p^n]{a})$ with $a\in\K^\times$ and $n\geq1$.
  Then the restriction
  \[
   \phi\colon G_{\K}(p)=\Gal(\K(p)/\K)\longrightarrow\Gal(\sqrt[p^\infty]{\K}/\K)
  \]
induces an isomorphism $\phi^\ast\colon \rmH^1(\Gal(\sqrt[p^\infty]{\K}/\K),\F_p)\overset{\sim}{\to} \rmH^1(G,\F_p)$, as
$\Ker(\phi)\subseteq\Phi(G_{\K}(p))$;
moreover, by Kummer theory both cohomology groups are isomorphic to the quotient $\K^\times/(\K^\times)^p$.

Moreover, let $\K(\zeta_{p^\infty})$ be the compositum of all $p$-power cyclotomic extensions of $\K$.
Then either $\Gal(\K(\zeta_{p^\infty})/\K)$ is isomorphic to $\Z_p$, or it is trivial (if $\K$ contains all roots of 1 of $p$-power order).
Furthermore, again by Kummer theory one has
\[
 \Gal(\sqrt[p^\infty]{\K}/\K)\simeq\Gal(\sqrt[p^\infty]{\K}/\K(\zeta_{p^\infty}))\rtimes\Gal(\K(\zeta_{p^\infty})/\K),
\]
where $\Gal(\sqrt[p^\infty]{\K}/\K(\zeta_{p^\infty}))$ is a free abelian pro-$p$ group, and the action of the right-hand side factor on the left-hand side factor is induced by the $p$-cyclotomic character $\theta_{\K}$ --- i.e., 
\[
 ghg^{-1}=h^\lambda\qquad\forall\:g\in\Gal(\K(\zeta_{p^\infty})/\K),\:h\in\Gal(\sqrt[p^\infty]{\K}/\K(\zeta_{p^\infty})),
\]
where $\lambda\in1+p\Z_p$ is defined by $g(\zeta)=\zeta^\lambda$ with $\zeta\in\K(\zeta_{p^\infty})$ a root of 1 of order a power of $p$. 
Therefore, the oriented pro-$p$ group $(\Gal(\sqrt[p^\infty]{\K}/\K),\bar\theta)$ is $\bar\theta$-abelian, where $\bar\theta$ is the orientation satisfying $\theta_{\K}=\bar\theta\circ\phi$.
Thus, the oriented pro-$p$ group $(G_{\mathbb{K}}(p),\theta_{\mathbb{K}})$ is Kummerian (cf. \cite[Thm.~4.2]{eq:kummer} and \cite[Thm.~2.8]{qw:bogo}).
Moreover, for every $p$-extension $\mathbb{L}/\K$, $\K$ can be replaced by $\mathbb{L}$ and thus also the oriented pro-$p$ group $(G_{\mathbb{L}}(p),\theta_{\mathbb{L}})$ is Kummerian --- we underline that $\theta_{\mathbb{L}}$ is the restriction of $\theta_{\K}$ to $G_{\mathbb{L}}(p)$.

  One has also the following examples of Kummerian oriented pro-$p$ groups.
  
  \begin{exam}\label{ex:kummer}\rm
  \begin{itemize}
  \item[(a)] If $G$ is a finitely generated free pro-$p$ group, then the oriented pro-$p$ group $(G,\theta)$ is Kummerian for any torsion-free orientation $\theta\colon G\to1+p\Z_p$ (cf. \cite[Prop.~5.5]{eq:kummer}).
  Indeed, the subgroup
\[
 N=\left\{\:[g,h]h^{\theta(g)^{-1}-1}\:\mid\:g\in G,\:h\in\Ker(\theta)\:\right\}\subseteq G
\]
is a normal subgroup of $G$ contained in both $\Phi(G)$ and $\Ker(\theta)$, and the quotient $G/N$ has a presentation as in \eqref{eq:thetabelian pres}.
 
 \item[(b)] If $G$ is a Demushkin pro-$p$ group whose canonical orientation $\theta_G$ is torsion-free, then $\theta_G$ is the only orientation which completes $G$ into a Kummerian oriented pro-$p$ group (cf. \cite[Thm.~4]{labute}, see also \cite[Thm.~7.6]{eq:kummer}).
 In particular, if $G$ occurs as the maximal pro-$p$ Galois group of a field containing a root of 1 of order $p$, then $\theta_G=\theta_{\K}$.
  \item[(c)] If $(G_0,\theta)$ is a Kummerian oriented pro-$p$ group, with $\theta$ a torsion-free orientation, and $A\simeq\Z_p$, then also $(A\rtimes_\theta G_0,\tilde\theta)$ is Kummerian (cf. \cite[Prop.~3.6]{eq:kummer}).
  Also, if $(G_1,\theta_1)$ and $(G_2,\theta_2)$ are Kummerian oriented pro-$p$ groups, with $\theta_1,\theta_2$ torsion-free orientations, then also $(G_1\ast G_2,\theta)$ is Kummerian (cf. \cite[Prop.~7.5]{eq:kummer}).
\item[(d)] By the previous examples, every oriented pro-$p$ group of elementary type with torsion-free orientation $\theta$ is Kummerian (cf. \cite[\S~7]{eq:kummer} and \cite[\S~5.3]{qw:bogo}).
  \end{itemize}
\end{exam}

  Let $\K$ be a field containing a root of 1 of order $p$ (and also $\sqrt{-1}$ if $p=2$).
Then it is well-known that for every $\alpha\in\rmH^1(G_{\K}(p),\F_p)$ one has $\alpha\smallsmile\alpha=0$ (cf. Remark~\ref{rem:cup gradcomm}, and, e.g., \cite[Rem.~4.2]{qw:bogo} if $p=2$).
In \cite[Thm.~8.1]{mt:conj}, J.~Mina\v{c} and N.D.~T\^an proved the following: for every $\alpha\in\rmH^1(G_{\K}(p),\F_p)$ and for every $n>2$, the $n$-fold Massey product $\langle\alpha,\ldots,\alpha\rangle$ is defined, and vanishes.

We prove that pro-$p$ groups which may be completed into a Kummerian oriented pro-$p$ group with torsion-free orientation enjoy the same property.

\begin{thm}\label{thm:kummer Massey cyc}
 Let $(G,\theta)$ be a Kummerian oriented pro-$p$ group, with torsion-free orientation $\theta$.
 Then for every $\alpha\in\rmH^1(G_{\K}(p),\F_p)$ and for every $n>2$, the $n$-fold Massey product $\langle\alpha,\ldots,\alpha\rangle$ vanishes.
\end{thm}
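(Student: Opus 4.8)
The plan is to reinterpret the vanishing of $\langle\alpha,\dots,\alpha\rangle$ as the existence of a homomorphism to $\dbU_{n+1}$ via Proposition~\ref{prop:masse unip}--(ii), to replace $G$ by a $\bar\theta$-abelian quotient by means of the Kummerian hypothesis, and then to produce the homomorphism explicitly from the semidirect-product presentation \eqref{eq:thetabelian pres}, using Proposition~\ref{prop:commutators} to accommodate the orientation. Throughout, write $\dbU=\dbU_{n+1}$ and $\dbU_{(k)}$ for the terms of its lower central series, as in \S~\ref{ssec:unipot}.

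If $\alpha=0$, the constant homomorphism $G\to\dbU$ witnesses $0\in\langle 0,\dots,0\rangle$, so assume $\alpha\neq 0$. By Definition~\ref{defin:kummer} there is an epimorphism of oriented pro-$p$ groups $\phi\colon(G,\theta)\to(\bar G,\bar\theta)$ with $(\bar G,\bar\theta)$ $\bar\theta$-abelian and $\Ker(\phi)\subseteq\Phi(G)$, so that $\phi^\ast\colon\rmH^1(\bar G,\F_p)\to\rmH^1(G,\F_p)$ is an isomorphism; write $\alpha=\phi^\ast(\bar\alpha)$ with $0\neq\bar\alpha\in\rmH^1(\bar G,\F_p)$. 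It then suffices to build a continuous homomorphism $\bar\rho\colon\bar G\to\dbU$ with $\bar\rho_{i,i+1}=\bar\alpha$ for all $i$, because $\rho:=\bar\rho\circ\phi$ will satisfy $\rho_{i,i+1}=\alpha$ and Proposition~\ref{prop:masse unip}--(ii) then applies. By \eqref{eq:def thetabelian} one has $\bar G\isom A\rtimes_{\bar\theta}\bar G_0$ with $A:=\Ker(\bar\theta)$ free abelian pro-$p$ and $\bar G_0\isom\Img(\bar\theta)=\Img(\theta)$; since $\theta$ is torsion-free, $\Img(\theta)$ is a closed subgroup of $1+p\Z_p$ (of $1+4\Z_2$ if $p=2$), hence trivial or procyclic. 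If $\Img(\bar\theta)=\{1\}$ then $\bar G=A$ is free abelian pro-$p$; otherwise $\bar G_0=\overline{\langle x_0\rangle}\isom\Z_p$ with $x_0$ of infinite order, and I put $\lambda=\bar\theta(x_0)$, so that $\lambda-1\in p\Z_p$, and $\lambda-1\in 4\Z_2$ when $p=2$.

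First I would define $\bar\rho$ on $A$. Let $N=\sum_{i=1}^{n}E_{i,i+1}$, so $I+N\in\dbU$ is a regular unipotent element of finite $p$-power order, and let $\sigma\colon\dbU\to\dbU/\dbU_{(2)}\isom\F_p^{\,n}$ be the homomorphism recording the superdiagonal entries, so $\sigma(I+N)=(1,\dots,1)$. As $A$ is free abelian pro-$p$, lift $\bar\alpha\vert_A$ to a continuous homomorphism $c\colon A\to\Z_p$ and put $\bar\rho\vert_A\colon a\mapsto(I+N)^{c(a)}$; this is a continuous homomorphism into $\langle I+N\rangle$, and $\sigma(\bar\rho(a))=(\bar\alpha(a),\dots,\bar\alpha(a))$. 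If $\Img(\bar\theta)=\{1\}$, or if $\bar\alpha\vert_A=0$ (so $\bar\alpha$ factors through $\bar G_0$), one is done directly --- in the second case by sending $x_0$ to a power of $I+N$ with superdiagonal $\bar\alpha(x_0)$. So assume $\bar\alpha\vert_A\neq 0$ and fix $a_0\in A$ with $c(a_0)\in\Z_p^\times$; then $A_0:=\bar\rho(a_0)$ is a regular unipotent matrix with all superdiagonal entries nonzero and $\bar\rho(A)=\langle A_0\rangle=\langle I+N\rangle$, and $A_0$ meets the hypothesis of Proposition~\ref{prop:commutators}.

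Finally I would choose $T:=\bar\rho(x_0)$. Since $\bar\rho(A)=\langle A_0\rangle$ and $\dbU$ is a finite $p$-group, the assignment $a\mapsto\bar\rho(a)$, $x_0\mapsto T$ extends to a continuous homomorphism $\bar\rho\colon\bar G\to\dbU$ precisely when $TA_0T^{-1}=A_0^{\lambda}$, i.e.\ $[T,A_0]=A_0^{\lambda-1}=:C$. This is the heart of the matter, and it is here that torsion-freeness enters: because $\lambda-1\in p\Z_p$ and $A_0^{\,p}=I+(A_0-I)^{p}\in\dbU_{(p)}$ over $\F_p$ --- and, for $p=2$, because $\lambda-1\in 4\Z_2$ and $A_0^{\,4}=I+(A_0-I)^{4}\in\dbU_{(4)}$ over $\F_2$ --- the element $C$, being a power of $A_0^{\,p}$ (resp.\ $A_0^{\,4}$), lies in $\dbU_{(k_0)}$ with $k_0=p$ (resp.\ $k_0=4$); in particular $k_0\geq 3$. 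If $k_0>n$ then $C=I$, and $T:=I+\bar\alpha(x_0)N$, a polynomial in $N$ and hence centralizing $A_0$, works; if $k_0\leq n$, Proposition~\ref{prop:commutators} furnishes $B\in\dbU_{(k_0-1)}\subseteq\dbU_{(2)}$ with $[B,A_0]=C$, and since $I+\bar\alpha(x_0)N$ commutes with both $A_0$ and $C$, the first identity in \eqref{eq:comm0} yields $[(I+\bar\alpha(x_0)N)B,A_0]=[B,A_0]=C$, so $T:=(I+\bar\alpha(x_0)N)B$ works. In either case $\sigma(T)=(\bar\alpha(x_0),\dots,\bar\alpha(x_0))$. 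Since $\sigma\circ\bar\rho$ and $g\mapsto(\bar\alpha(g),\dots,\bar\alpha(g))$ are homomorphisms $\bar G\to\F_p^{\,n}$ agreeing on $A$ and on $x_0$, they coincide, i.e.\ $\bar\rho_{i,i+1}=\bar\alpha$ for every $i$, as required. The one genuinely delicate point is the computation placing $C=A_0^{\lambda-1}$ in $\dbU_{(k_0)}$ with $k_0\geq 3$, which is exactly what allows Proposition~\ref{prop:commutators} to be invoked and is where the torsion-freeness of $\theta$ (and $\Img(\theta)\subseteq 1+4\Z_2$ for $p=2$) is essential; the rest is routine bookkeeping with the lower central series of $\dbU$.
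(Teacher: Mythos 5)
Your proof is correct and follows essentially the same route as the paper: reduce to the $\bar\theta$-abelian quotient via the Kummerian hypothesis, send $\Ker(\bar\theta)$ to powers of the regular unipotent matrix $I+N$, observe that torsion-freeness of $\theta$ forces $A_0^{\lambda-1}$ into $\dbU_{(k_0)}$ with $k_0\geq 3$, and invoke Proposition~\ref{prop:commutators} to realize the conjugation relation. The only difference is presentational --- you use the semidirect-product universal property and a $\Z_p$-valued lift of $\bar\alpha\vert_A$, whereas the paper works directly with the presentation \eqref{eq:thetabelian pres} and integer exponents $a_h$ --- but the key computations are the same.
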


\begin{proof}
First of all, observe that, if $p=2$, then $\alpha\smallsmile\alpha=0$ for every $\alpha\in\rmH^1(G,\F_2)$, as $(G,\theta)$ is Kummerian and $\theta$ is torsion-free (cf., e.g., \cite[Fact.~7.1]{qw:cyc}), while if $p\neq 2$ this is true anyway (cf. Remark~\ref{rem:cup gradcomm}), so that the sequence $\alpha,\ldots,\alpha$ of length $n$ satisfies the triviality condition \eqref{eq:cup 0}.


Suppose first that $(G,\theta)$ is $\theta$-abelian.
Then $G$ has a presentation
\begin{equation}\label{eq:thetabelian pres proof1}
  G=\left\langle\:x_h\:\mid\:h\in J,\: [x_h,x_l]=1\:\forall h,l\in J\:\right\rangle
\end{equation}
for some set $J$, if $\Ker(\theta)=G$; or
\begin{equation}\label{eq:thetabelian pres proof2}
  G=\left\langle\:x_0,x_h\:\mid\:h\in J,\:[x_0,x_h]=x_h^{q},\:  [x_h,x_l]=1\:\forall h,l\in J\:\right\rangle,
\end{equation}
for some set $J$ and $q=p^f$ with $f\geq1$ (and $f\geq2$ if $p=2$), if $\Ker(\theta)\neq G$ (cf. \eqref{eq:thetabelian pres}).

Put $\dbU=\dbU_{n+1}$ and $I=I_{n+1}$, and set
\[
  A=I+\sum_{i=1}^nE_{i,i+1}={\small\left(\begin{array}{ccccc} 1 & 1 & & & 0\\ & 1 &1 && \\
  &&\ddots&\ddots& \\ &&&1&1 \\ &&&&1                      \end{array}\right)}\in\dbU.
\]
For every integers $a,b$ such that $0\leq a,b\leq p-1$, one has 
\begin{equation}\label{eq:equiv mod p A}
 A^a\equiv I+\sum_{i=1}^n\bar a\cdot E_{i,i+1}\mod \dbU_{(2)},
\end{equation}
where $\bar a\in\F_p$ denotes the class represented by $a$, and obviously $[A^a,A^b]=I$. 
Moreover, one has
\[ A^q=\begin{cases} I, &\text{if }q>n,\\
 I+\sum_{i=1}^{n+1-q}E_{i,i+q}\in\dbU_{(q)},      &\text{if }q\leq n,
     \end{cases}\]
where $q=p^f$ is as in \eqref{eq:thetabelian pres proof2}.
In the latter case, by Proposition~\ref{prop:commutators} there exists $B\in\dbU_{(q-1)}$ such that $[B,A]=A^q$.
 
Now for every $h\in J$ set $a_0,a_h\in\Z$ such that $0\leq a_j\leq p-1$ and $\bar a_j=\alpha(x_j)$ for every $j\in J\cup\{0\}$.
Observe that 
\[\begin{split}
   \left[BA^{a_0},A^{a_h}\right]&={}^B\left[A^{a_0},A^{a_h}\right]\cdot\left[B,A^{a_h}\right]\\
& =I\cdot \left({}^{A^{a_h-1}}[B,A]\cdots{}^A[B,A]\cdots[B,A]\right)\\
 &=A^{a_hq}.
  \end{split}\]
Then the assignment $\rho(x_h)=A^{a_h}$ for all $h\in J$, and $\rho(x_0)=BA^{a_0}$ if $\Ker(\theta)\neq G$, gives a continuous homomorphism $\rho\colon G\to\dbU$, which satisfies $\rho_{i,i+1}(x_j)=\bar a_j=\alpha(x_j)$ for every $i=1,\ldots,n$ and every $j\in J\cup\{0\}$ by \eqref{eq:equiv mod p A} ---
observe that $$\rho(x_0)\equiv A^{a_0}\mod \dbU_{(2)}$$ as $B\in\dbU_{(2)}$.
Hence the $n$-fold Massey product $\langle\alpha,\ldots,\alpha\rangle$ vanishes by Proposition~\ref{prop:masse unip}--(ii).

If $(G,\theta)$ is an arbitrary Kummerian oriented pro-$p$ group, then there exists an epimorphism of oriented pro-$p$ groups $\phi\colon(G,\theta)\to(\bar G,\bar \theta)$ with $(\bar G,\bar \theta)$ a $\bar\theta$-abelian pro-$p$ group and $\Ker(\phi)\subseteq\Phi(G)$.
Therefore, the inflation map 
$$\mathrm{inf}_{\bar G,G}^1\colon \rmH^1(\bar G,\F_p)\longrightarrow \rmH^1(G,\F_p)$$ 
induced by $\phi$ is an isomorphism, and there exists $\bar\alpha\in\rmH^1(\bar G,\F_p)$ such that $\mathrm{inf}_{\bar G,G}^1(\bar\alpha)=\alpha$, i.e. $\alpha=\bar\alpha\circ\phi$.
The argument above yields a continuous homomorphism $\rho\colon \bar G\to\dbU$ satisfying 
$\rho_{i,i+1}=\bar\alpha$ for every $i=1,\ldots,n$, and thus the continuous homomorphism $\rho\circ\phi\colon G\to\dbU$
satisfies 
$$(\rho\circ\phi)_{i,i+1}=\bar\alpha\circ\phi=\alpha\qquad\text{for every }i=1,\ldots,n.$$
Hence the $n$-fold Massey product $\langle\alpha,\ldots,\alpha\rangle$ vanishes by Proposition~\ref{prop:masse unip}--(ii).
\end{proof}

By Example~\ref{ex:kummer}--(d), Theorem~\ref{thm:kummer Massey cyc} implies the following.

\begin{cor}
Let $(G,\theta)$ be an oriented pro-$p$ group of elementary type with torsion-free orientation $\theta$, and let $\alpha$ be an element of $\rmH^1(G,\F_p)$.
Then for every $n\geq2$ the $n$-fold Massey product $\langle\alpha,\ldots,\alpha\rangle$ vanishes.
\end{cor}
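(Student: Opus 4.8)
The plan is to obtain this statement as a formal consequence of Theorem~\ref{thm:kummer Massey cyc}. The first step is to recall, via Example~\ref{ex:kummer}--(d), that an oriented pro-$p$ group of elementary type whose orientation $\theta$ is torsion-free is automatically Kummerian; hence $(G,\theta)$ satisfies the hypotheses of Theorem~\ref{thm:kummer Massey cyc}. All the substantive work — the construction of the block-diagonal-free unitriangular representation built from the single matrix $A=I+\sum E_{i,i+1}$, and its descent along a $\bar\theta$-abelian quotient with Frattini kernel — has already been done there, so nothing of that nature needs to be repeated.

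Next I would split according to the value of $n$. For $n>2$, Theorem~\ref{thm:kummer Massey cyc} applies verbatim to $(G,\theta)$ and yields that the $n$-fold Massey product $\langle\alpha,\ldots,\alpha\rangle$ vanishes for every $\alpha\in\rmH^1(G,\F_p)$, which is exactly the assertion in this range. For the remaining value $n=2$, which Theorem~\ref{thm:kummer Massey cyc} does not cover, I would use \eqref{eq:2Massey cup}: here $\langle\alpha,\alpha\rangle=\{\alpha\smallsmile\alpha\}$, so it suffices to check $\alpha\smallsmile\alpha=0$. If $p\neq2$ this is graded-commutativity (Remark~\ref{rem:cup gradcomm}); if $p=2$ it follows from $\theta$ being torsion-free together with the Kummerian property, exactly as in the opening lines of the proof of Theorem~\ref{thm:kummer Massey cyc}.

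I do not anticipate any genuine obstacle here: the only point deserving attention is the mismatch in the admissible range of $n$ — Theorem~\ref{thm:kummer Massey cyc} is stated for $n>2$, whereas the corollary claims vanishing for every $n\geq2$ — and this is disposed of by the elementary cup-product computation above. The proof will therefore be two short lines invoking Example~\ref{ex:kummer}--(d), Theorem~\ref{thm:kummer Massey cyc}, and \eqref{eq:2Massey cup} together with Remark~\ref{rem:cup gradcomm}.
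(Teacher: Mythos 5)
Your proposal is correct and follows essentially the same route as the paper, which simply cites Example~\ref{ex:kummer}--(d) and Theorem~\ref{thm:kummer Massey cyc}. Your additional care with the $n=2$ case (via \eqref{eq:2Massey cup}, Remark~\ref{rem:cup gradcomm}, and the Kummerian argument for $p=2$) is a welcome touch of rigor that the paper leaves implicit, since the theorem is stated only for $n>2$ while the corollary claims $n\geq2$.
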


 \section{Oriented pro-$p$ groups and vanishing of Massey products}
 
 \subsection{Semidirect products}
 
 Let $(G_0,\theta_0)$ be an oriented pro-$p$ group, let $Z\simeq\Z_p$ be a cyclic pro-$p$ group, and set
 \begin{equation}\label{eq:wad}
  (G,\theta)=Z\rtimes_{\theta_0}(G_0,\theta_0).  
 \end{equation}
Then the first and second $\F_p$-cohomology groups of $G$ decompose as follow:
 \begin{eqnarray}\label{eq:semidirectprod cohom 1}
   \rmH^1(G,\F_p) &=& \rmH^1(G_0,\F_p)\oplus \rmH^1(Z,\F_p), \\
   \rmH^2(G,\F_p) &=& \rmH^2(G_0,\F_p)\oplus\left(\rmH^1(G_0,\F_p)\smallsmile \rmH^1(Z,\F_p)\right),\label{eq:semidirectprod cohom 2}
 \end{eqnarray}
(cf. \cite[Thm.~3.13]{qw:cyc}) --- observe that $\rmH^n(Z,\F_p)=0$ for every $n\geq2$.
Moreover, if $\{\chi_h\mid h\in J\}$ is a basis of $\rmH^1(G_0,\F_p)$, and $\psi$ generates $\rmH^1(Z,\F_p)\simeq\F_p$, then 
$$\left\{\:\chi_h\smallsmile\psi\:\mid\: h\in J\:\right\}$$
is a basis for $\rmH^1(G_0,\F_p)\smallsmile \rmH^1(Z,\F_p)$.

\begin{rem}\rm
 The description of the $\F_p$-cohomology of the semidirect product \eqref{eq:wad} has been provided first by A.~Wadsworth (cf.  \cite[Cor.~3.4 and Thm.~3.6]{wadsworth}). 
\end{rem}

 \begin{thm}\label{thm:semidirectprod}
Let $(G_0,\theta_0)$ be a Kummerian oriented pro-$p$ group with torsion-free orientation $\theta_0$, and let $Z\simeq\Z_p$ be a cyclic pro-$p$ group.
Set $G=Z\rtimes_{\theta_0}G_0$. 
\begin{itemize}
 \item[(i)] If $G_0$ satisfies the $n$-Massey vanishing property for every $n>2$, then also $G$ satisfies the $n$-Massey vanishing property for every $n>2$.
 \item[(ii)] If $G_0$ satisfies the strong $n$-Massey vanishing property for every $n>2$, then also $G$ satisfies the strong $n$-Massey vanishing property for every $n>2$.
\end{itemize}
 \end{thm}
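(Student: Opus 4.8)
The plan is to treat both parts in parallel, reducing each to a sequence of \emph{nonzero} classes in $\rmH^1(G,\F_p)$ and then reading off the structure of such sequences from the cohomological decomposition \eqref{eq:semidirectprod cohom 1}--\eqref{eq:semidirectprod cohom 2}. For part (ii), Proposition~\ref{prop:trivial strong} reduces us to showing that every sequence $\alpha_1,\dots,\alpha_n$ ($n>2$) of nonzero elements of $\rmH^1(G,\F_p)$ satisfying the triviality condition \eqref{eq:cup 0} has $0\in\langle\alpha_1,\dots,\alpha_n\rangle$; for part (i), Proposition~\ref{prop:Massey cup}(i) handles a defined Massey product with some $\alpha_i=0$, so again we may assume all $\alpha_i\neq0$, and then Remark~\ref{rem:Massey trivialcup} forces \eqref{eq:cup 0}. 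Since $(G_0,\theta_0)$ is Kummerian with torsion-free orientation, Example~\ref{ex:kummer}(c) shows that $(G,\theta)=(Z\rtimes_{\theta_0}G_0,\tilde\theta_0)$ is Kummerian with torsion-free orientation; in particular $\gamma\smallsmile\gamma=0$ for every $\gamma\in\rmH^1(G,\F_p)$, and $\langle\gamma,\dots,\gamma\rangle\ni0$ for every $n>2$ by Theorem~\ref{thm:kummer Massey cyc}.

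Writing $\pi\colon G\to G_0$ for the projection and $\iota\colon G_0\hookrightarrow G$ for the canonical section, I would normalize the decomposition \eqref{eq:semidirectprod cohom 1} so that the first summand is $\pi^\ast\rmH^1(G_0,\F_p)$ and the second is $\Ker(\iota^\ast)$ (possible since $\iota^\ast\pi^\ast=\id$), with generator $\psi$; then each $\alpha_i$ is written uniquely as $\alpha_i=\pi^\ast\chi_i+\lambda_i\psi$ with $\chi_i\in\rmH^1(G_0,\F_p)$ and $\lambda_i\in\F_p$, and $\iota^\ast\alpha_i=\chi_i$. Expanding $\alpha_i\smallsmile\alpha_{i+1}$ using $\psi\smallsmile\psi=0$ and graded-commutativity, and using \eqref{eq:semidirectprod cohom 2} together with the fact (recorded after \eqref{eq:semidirectprod cohom 2}) that $\chi\mapsto\chi\smallsmile\psi$ identifies $\rmH^1(G_0,\F_p)$ with the second summand of $\rmH^2(G,\F_p)$, the condition $\alpha_i\smallsmile\alpha_{i+1}=0$ splits into
\[
\chi_i\smallsmile\chi_{i+1}=0\ \text{ in }\rmH^2(G_0,\F_p)\qquad\text{and}\qquad\lambda_{i+1}\chi_i=\lambda_i\chi_{i+1}\ \text{ in }\rmH^1(G_0,\F_p),
\]
for $i=1,\dots,n-1$. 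The key step is then a dichotomy: since $\alpha_i\neq0$, whenever $\lambda_i=0$ we have $\chi_i\neq0$, and the relation above forces $\lambda_{i-1}=0$ (if $i>1$) and $\lambda_{i+1}=0$ (if $i<n$); hence either $\lambda_i=0$ for all $i$, or $\lambda_i\neq0$ for all $i$.

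In the first case $\alpha_i=\pi^\ast\chi_i$ is inflated from $G_0$. For part (ii), $\chi_i\smallsmile\chi_{i+1}=0$ and the strong $n$-Massey vanishing property of $G_0$ give a homomorphism $\rho_0\colon G_0\to\dbU_{n+1}$ with $(\rho_0)_{i,i+1}=\chi_i$ by Proposition~\ref{prop:masse unip}(ii), and $\rho_0\circ\pi\colon G\to\dbU_{n+1}$ then witnesses the vanishing of $\langle\alpha_1,\dots,\alpha_n\rangle$. For part (i), I would first restrict a homomorphism $\bar\rho\colon G\to\bar\dbU_{n+1}$ witnessing the defined Massey product in $G$ along $\iota$: the resulting $\bar\rho\circ\iota\colon G_0\to\bar\dbU_{n+1}$ has $(i,i+1)$-entry $\iota^\ast\alpha_i=\chi_i$, so $\langle\chi_1,\dots,\chi_n\rangle$ is defined in $G_0$; the $n$-Massey vanishing property of $G_0$ then supplies $\rho_0$ as above, and composing with $\pi$ finishes. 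In the second case, the relations $\lambda_{i+1}\chi_i=\lambda_i\chi_{i+1}$ with all $\lambda_i\in\F_p^\times$ give $\chi_i=(\lambda_i/\lambda_1)\chi_1$, so $\alpha_i=\lambda_i\beta$ with $\beta:=\lambda_1^{-1}\pi^\ast\chi_1+\psi\neq0$; since $\langle\beta,\dots,\beta\rangle\ni0$ by Theorem~\ref{thm:kummer Massey cyc}, iterating Proposition~\ref{prop:Massey cup}(ii) over the $n$ slots gives $\langle\alpha_1,\dots,\alpha_n\rangle\supseteq(\lambda_1\cdots\lambda_n)\langle\beta,\dots,\beta\rangle\ni0$.

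I expect the dichotomy to be the crux of the argument: once restricted to nonzero sequences, the triviality of consecutive cup products rigidly constrains the ``$Z$-components'' $\lambda_i$, and in the nondegenerate case forces all the $\alpha_i$ to be proportional, so that the problem collapses onto the diagonal Massey product handled by Theorem~\ref{thm:kummer Massey cyc}. The other point requiring care is the precise normalization of \eqref{eq:semidirectprod cohom 1}--\eqref{eq:semidirectprod cohom 2}---in particular identifying the $\rmH^1(G_0,\F_p)$-summand with $\pi^\ast\rmH^1(G_0,\F_p)$ and controlling its image under $\iota^\ast$---which is what legitimizes the inflation/section bookkeeping in the first case.
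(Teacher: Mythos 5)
Your proposal is correct and follows essentially the same route as the paper: decompose each $\alpha_i$ via \eqref{eq:semidirectprod cohom 1}, split the cup-product condition into $\alpha_i\vert_{G_0}\smallsmile\alpha_{i+1}\vert_{G_0}=0$ and $\lambda_{i+1}\alpha_i\vert_{G_0}=\lambda_i\alpha_{i+1}\vert_{G_0}$, derive the same dichotomy (all $Z$-components zero, or all nonzero and the $\alpha_i$ proportional), and handle the first case by inflating a representation from $G_0$ and the second by Theorem~\ref{thm:kummer Massey cyc} together with iterated use of Proposition~\ref{prop:Massey cup}(ii). The only cosmetic difference is your explicit normalization of the splitting via $\pi^\ast$ and $\iota^\ast$, which the paper leaves implicit.
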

 
 \begin{proof}
 First of all, since $(G_0,\theta_0)$ is Kummerian, also the semidirect product $(G,\theta)=Z\rtimes_{\theta_0} (G_0,\theta_0)$ is Kummerian, cf. Example~\ref{exam:Demushkin}--(c).
 Let $\pi\colon G\to G_0$ denote the canonical projection.
 
Let $\psi$ be a generator of $\rmH^1(Z,\F_p)$, and let $\alpha_1,\ldots\alpha_n$ be a sequence of non-trivial elements of $\rmH^1(G,\F_p)$
satisfying \eqref{eq:cup 0}.
  By \eqref{eq:semidirectprod cohom 1}, for every $i=1,\ldots,n$ one has $\alpha_i=\alpha_i\vert_{G_0}+b_i\psi$ for some
  $b_i\in\F_p$.
  Hence
  \begin{equation}\label{eq:cup trivial}
 0=\alpha_i\smallsmile\alpha_{i+1}=
 \underbrace{\left(\alpha_i\vert_{G_0}\smallsmile\alpha_{i+1}\vert_{G_0}\right)}_{\in\rmH^2(G_0,\F_p)}+
 \underbrace{\left(b_{i+1}\alpha_i\vert_{G_0}-b_i\alpha_{i+1}\vert_{G_0}\right)\smallsmile\psi}_{\in\rmH^1(G_0,\F_p)\smallsmile\psi}
  \end{equation}
for every $i=1,\ldots,n-1$.
By \eqref{eq:semidirectprod cohom 2}, equality \eqref{eq:cup trivial} holds if, and only if, 
$$\alpha_i\vert_{G_0}\smallsmile\alpha_{i+1}\vert_{G_0}=0\qquad \text{and}\qquad 
b_{i+1}\alpha_i\vert_{G_0}=b_i\alpha_{i+1}\vert_{G_0}$$
for every $i=1,\ldots,n-1$ --- indeed, for any $\alpha\in\rmH^1(G_0,\F_p)$, $\alpha\smallsmile\psi=0$ implies $\alpha=0$. 
Altogether, one has two cases:
\begin{itemize}
 \item[(a)] either $b_i=0$ and $$\alpha_i=\alpha_i\vert_{G_0}\circ\pi\neq0$$ for every $i=1,\ldots,n$;
 \item[(b)] or 
$$b_i\neq 0 \qquad\text{and}\qquad\alpha_i=\frac{b_i}{b_1}\left(\alpha_1\vert_{G_0}+b_1\psi\right)=\frac{b_i}{b_1}\cdot\alpha_1$$
for every $i=1,\ldots,n$
(recall that we are assuming that $\alpha_i\neq0$ for every $i$, so that if $b_i=0$ for some $i$ then $\alpha_i\vert_{G_0}\neq0$, and conversely if $\alpha_i\vert_{G_0}=0$ then $b_i\neq0$).
\end{itemize}
\medskip

\noindent {\sl Case (a).} 
Assume that the $n$-fold Massey product $\langle\alpha_1,\ldots,\alpha_n\rangle$ is defined in $\bfH^\bullet(G)$, and that $\alpha_i\neq0$ for every $i$ (cf. Proposition~\ref{prop:Massey cup}--(i)), to prove statement~(i).
By Proposition~\ref{prop:masse unip}--(i), there exists a homomorphism $\bar\rho\colon G\to\bar\dbU_{n+1}$ such that $\bar\rho_{i,i+1}=\alpha_i$ for all $i=1,\ldots,n$.
Now consider the restriction $$\bar\rho\vert_{G_0}\colon G_0\longrightarrow\bar\dbU_{n+1}.$$
Then again by Proposition~\ref{prop:masse unip}--(i) the $n$-fold Massey product $\langle\alpha_1\vert_{G_0},\ldots,\alpha_n\vert_{G_0}\rangle$ is defined in $\bfH^\bullet(G_0)$, too, and thus by hypothesis it vanishes.
Hence Proposition~\ref{prop:masse unip}--(ii) yields a homomorphism $\rho\colon G_0\to\dbU_{n+1}$ satisfying $\rho_{i,i+1}=\alpha_i\vert_{G_0}$ for all $i=1,\ldots,n$.
Then $\rho\circ\pi\colon G\to\dbU_{n+1}$ is a homomorphism satisfying 
$$(\rho\circ\pi)_{i,i+1}=\alpha_i\vert_{G_0}\circ\pi=\alpha_i\qquad\text{for every }i,$$ and by Proposition~\ref{prop:masse unip}--(ii) the $n$-fold Massey product $\langle\alpha_1,\ldots,\alpha_n\rangle$ vanishes in $\bfH^\bullet(G)$.
This proves (i) in case (a).

Now assume just that $\alpha_i\smallsmile\alpha_{i+1}=0$ for all $i=1,\ldots,n-1$, and that $\alpha_i\neq 0$ for every $i$ (cf. Proposition~\ref{prop:trivial strong}), to prove statement~(ii).
Since  
$$\alpha_i\vert_{G_0}\smallsmile\alpha_{i+1}\vert_{G_0}=\mathrm{res}_{G,G_0}^2(\alpha_i\smallsmile\alpha_{i+1})=\mathrm{res}_{G,G_0}^2(0)=0$$
(cf. \cite[Prop.~1.5.3]{nsw:cohn}) for all $i=1,\ldots,n-1$,
the $n$-fold Massey product $\langle\alpha_1\vert_{G_0},\ldots,\alpha_n\vert_{G_0}\rangle$ vanishes in $\bfH^\bullet(G_0)$ by hypothesis.
Hence, by Proposition~\ref{prop:masse unip} there exists a homomorphism $\rho\colon G_0\to\dbU_{n+1}$ such that $\rho_{i,i+1}=\alpha_i\vert_{G_0}$ for every $i=1,\ldots,n$.
Then $\rho\circ\pi\colon G\to\dbU_{n+1}$ is a homomorphism satisfying $(\rho\circ\pi)_{i,i+1}=\alpha_i$ for every $i=1,\ldots,n$, and by Proposition~\ref{prop:masse unip}--(ii) the $n$-fold Massey product $\langle\alpha_1,\ldots,\alpha_n\rangle$ vanishes in $\bfH^\bullet(G)$.
This proves (ii) in case~(a).

\medskip
\noindent {\sl Case (b).} 
Assume the $\alpha_i$'s are non-trivial multiples of each other.
Since $(G,\theta)$ is Kummerian, Theorem~\ref{thm:kummer Massey cyc} implies that the $n$-fold Massey product 
$\langle\alpha_1,\ldots,\alpha_1\rangle$ vanishes in $\bfH^\bullet(G)$.
Then by Proposition~\ref{prop:Massey cup}--(ii), one has
\[\begin{split}
 \langle\alpha_1,\alpha_2,\ldots,\alpha_n\rangle 
 &\supseteq \left\{\:\frac{b_2}{b_1}\cdot\beta\:\mid\:\beta\in \langle\alpha_1,\alpha_1,\alpha_3,\ldots,\alpha_n\rangle\:\right\}\\
 &\;\;\vdots\\
 &\supseteq
 \left\{\:\frac{b_2\cdots b_n}{b_1^{n-1}}\cdot\beta\:\mid\:\beta\in \langle\alpha_1,\ldots,\alpha_1\rangle\:\right\}\ni0,   
  \end{split}
\]
and thus $\langle\alpha_1,\ldots,\alpha_n\rangle$ vanishes in $\bfH^\bullet(G)$.
This proves both (i) and (ii) in case~(b).
\end{proof}



From Theorem~\ref{thm:semidirectprod} we deduce the following.

\begin{cor}\label{cor:semidirect}
 Let $\K$ be a field containing a root of 1 of order $p$ {\rm(}and $\sqrt{-1}\in\K$, if $p=2${\rm)}.
 Then the maximal pro-$p$ Galois group $G_{\K}(p)$ of $\K$ satisfies the strong $n$-Massey vanishing property for every $n>2$ in the following cases:
 \begin{itemize}
  \item[(i)] $\K$ is a $p$-rigid field;
  \item[(ii)] $\K$ is a valued $p$-Henselian field whose residue field $\kappa$ has maximal pro-$p$ Galois group satisfying the strong $n$-Massey vanishing property for every $n>2$.  
 \end{itemize}
\end{cor}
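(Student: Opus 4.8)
The plan is to derive Corollary~\ref{cor:semidirect} from the results already established, namely Theorem~\ref{thm:main}, Theorem~\ref{thm:semidirectprod}, Theorem~\ref{thm:kummer Massey cyc}, Proposition~\ref{prop:etc fields}, and the structure theory of oriented pro-$p$ groups of elementary type. In both cases (i) and (ii) the hypothesis ``$\K$ contains a root of $1$ of order $p$ and $\sqrt{-1}\in\K$ if $p=2$'' ensures (Example~\ref{exam:cyclo char}) that the $p$-cyclotomic character $\theta_\K\colon G_\K(p)\to 1+p\Z_p$ is a torsion-free orientation, so all the Kummerian machinery applies to $(G_\K(p),\theta_\K)$ --- recall from \S\ref{ssec:kummer} that this oriented pro-$p$ group is Kummerian in full generality, with no elementary-type assumption needed.

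For case (i), when $\K$ is $p$-rigid, Proposition~\ref{prop:etc fields}(iv) gives that $(G_\K(p),\theta_\K)$ is of elementary type. Since $\theta_\K$ is torsion-free (using $\sqrt{-1}\in\K$ when $p=2$), in particular $\Img(\theta_\K)\subseteq 1+4\Z_2$ when $p=2$, so the extra hypothesis of Theorem~\ref{thm:main} is satisfied. Hence $G_\K(p)$ satisfies the strong $n$-Massey vanishing property for every $n>2$ by Theorem~\ref{thm:main} directly. (Alternatively, one could invoke the known structural fact that a $p$-rigid field has $G_\K(p)\cong \Z_p\rtimes_\theta A$ for a free abelian pro-$p$ group $A$, i.e. it is ``$\theta$-abelian'' up to a semidirect factor, and apply Theorem~\ref{thm:semidirectprod} iteratively together with Theorem~\ref{thm:kummer Massey cyc} on the abelian core; but the cleanest route is simply via Theorem~\ref{thm:main} and Proposition~\ref{prop:etc fields}(iv).)

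For case (ii), where $\K$ is valued $p$-Henselian with residue field $\kappa$ whose maximal pro-$p$ Galois group $G_\kappa(p)$ satisfies the strong $n$-Massey vanishing property for every $n>2$, I would use the well-known decomposition of the maximal pro-$p$ Galois group of a $p$-Henselian valued field: there is an isomorphism of oriented pro-$p$ groups expressing $G_\K(p)$ as an iterated semidirect product $A_k\rtimes(\cdots\rtimes(A_1\rtimes G_\kappa(p))\cdots)$ with each $A_j\cong\Z_p$ (the inertia/value-group part), where the orientation restricts correctly and $\theta_\kappa$ is again torsion-free --- this is precisely the content used in Proposition~\ref{prop:etc fields}(vi) and \cite{EK,henselian}. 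Then, noting that $(G_\kappa(p),\theta_\kappa)$ is Kummerian with torsion-free orientation, one applies Theorem~\ref{thm:semidirectprod}(ii) once for each factor $A_j\cong\Z_p$: at each stage the intermediate group remains Kummerian with torsion-free orientation by Example~\ref{ex:kummer}(c), so the induction proceeds, and after finitely many (or, in the infinitely-generated case, after an inverse-limit / compatibility argument over finite pieces) steps we conclude that $G_\K(p)$ satisfies the strong $n$-Massey vanishing property for every $n>2$.

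The main obstacle I expect is making the $p$-Henselian decomposition in case (ii) precise as an isomorphism of \emph{oriented} pro-$p$ groups and checking that each successive semidirect factor is genuinely a $\Z_p$ acted on through a restriction of the ambient orientation, so that Theorem~\ref{thm:semidirectprod} applies verbatim at each step; one must also handle the possibility that the value group contributes more than one $\Z_p$-factor (or an infinitely generated free abelian pro-$p$ group), which forces either an inductive argument on the rank when finite, or passage through the inverse limit of finitely generated subgroups using Remark~\ref{rem:subgroups et} and the fact that Massey vanishing can be checked on finite quotients. Once that structural input is in place, the cohomological conclusion is a formal consequence of the theorems already proved. I would streamline the write-up by citing \cite[Thm.~D, Prop.~6.2--6.3]{MPQT} and \cite{EK,henselian} for the structure, and then invoking Theorem~\ref{thm:semidirectprod}(ii) and Theorem~\ref{thm:kummer Massey cyc}.
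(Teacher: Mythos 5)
Your argument for case (ii) is essentially the paper's: the paper cites \cite[\S~1]{EK} (for $\mathrm{char}(\kappa)\neq p$) and \cite[\S~3]{henselian} (for $\mathrm{char}(\kappa)=p$) to write $G_{\K}(p)=A\rtimes_{\theta_\kappa}G_{\kappa}(p)$ with $A$ free abelian, and then feeds this into Theorem~\ref{thm:semidirectprod}; your peeling-off of one $\Z_p$-factor at a time (with a limit argument when $A$ has infinite rank) is the implicit content of that deduction, and you are right that this step deserves the care you give it. For case (i), however, your ``cleanest route'' proves less than what is stated: Proposition~\ref{prop:etc fields} carries the standing hypothesis $[\K^\times:(\K^\times)^p]<\infty$, and pro-$p$ groups of elementary type are finitely generated by construction, whereas Corollary~\ref{cor:semidirect} imposes no such finiteness. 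The paper instead invokes \cite[Cor.~3.17]{cmq:fast} to conclude that $(G_{\K}(p),\theta_{\K})$ is $\theta_{\K}$-abelian --- i.e.\ $\Ker(\theta_{\K})\rtimes_{\theta_{\K}}G_0$ with $\Ker(\theta_{\K})$ free abelian of arbitrary rank and $G_0$ procyclic --- and again reduces to Theorem~\ref{thm:semidirectprod}. That is exactly your parenthetical alternative, so your proposal does contain the right argument, but note that you have the semidirect product backwards there: for a $\theta$-abelian group the free abelian part is the \emph{normal} factor acted on by a procyclic complement isomorphic to $\Img(\theta)$, not $\Z_p\rtimes_\theta A$ with $A$ acting. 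With that correction, and with the $\theta$-abelian route promoted from fallback to main argument in case (i), your proof matches the paper's.
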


\begin{proof}
 In the first case, the oriented pro-$p$ group $(G_{\K}(p),\theta_{\K})$ is $\theta_{\K}$-abelian by \cite[Cor.~3.17]{cmq:fast}.
 In the second case one has $G_{\K}(p)=A\rtimes_{\theta_{\kappa}} G_{\kappa}(p)$, with $A$ a free abelian pro-$p$ group, as shown in \cite[\S~1]{EK} if $\mathrm{char}(\kappa)\neq p$ (see also \cite[Thm.~3.6]{wadsworth}), and \cite[\S~3]{henselian} if $\mathrm{char}(\kappa)=p$.
\end{proof}



\subsection{Proof of Theorem~1.2}
\label{ssec:proof}

We are ready to prove Theorem~\ref{thm:main}.

\begin{thm}\label{thm:Massey ETC}
 Let $(G,\theta)$ be an oriented pro-$p$ group of elementary type, and suppose that either $\theta$ is a torsion-free orientation. 
 Then $G$ satisfies the strong $n$-Massey vanishing property for every $n>2$.
\end{thm}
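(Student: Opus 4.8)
The plan is to argue by structural induction on the construction of the oriented pro-$p$ group of elementary type $(G,\theta)$, following the recursive Definition~\ref{defin:ET} and combining the building-block results already proved in the excerpt. The base cases are: (1) $G$ a finitely generated free pro-$p$ group; and (2) $G$ a Demushkin group with torsion-free canonical orientation. The inductive steps are: (3) $G = \Z_p \rtimes_\theta G_0$ where $(G_0,\theta\vert_{G_0})$ is of elementary type; and (4) $G = G_1 \ast G_2$ a free pro-$p$ product of two oriented pro-$p$ groups of elementary type. For the free case, $\bfH^n(G,\F_p) = 0$ for $n \geq 2$ (free pro-$p$ groups have cohomological dimension $1$), so condition \eqref{eq:cup 0} is automatic and every $n$-fold Massey product for $n > 2$ contains $0$ trivially; hence the strong $n$-Massey vanishing property holds. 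For the semidirect-product step (3), I would invoke Theorem~\ref{thm:semidirectprod}--(ii) directly: $(G_0,\theta\vert_{G_0})$ is Kummerian (it is of elementary type with torsion-free orientation, by Example~\ref{ex:kummer}--(d)) and satisfies the strong $n$-Massey vanishing property for all $n>2$ by the inductive hypothesis, so $G = \Z_p \rtimes_{\theta} G_0$ does too.

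The two remaining pieces are the Demushkin base case and the free-product inductive step, and these are where the real work sits. For a Demushkin group $G$ with torsion-free canonical orientation, I expect to use the presentation \eqref{eq:pres Demushkin}, or alternatively the fact (Example~\ref{ex:kummer}--(b), (d)) that $(G,\theta_G)$ is Kummerian with torsion-free orientation, together with a direct analysis of Massey products. The point to establish is: given non-trivial $\alpha_1,\dots,\alpha_n \in \rmH^1(G,\F_p)$ with $\alpha_i \smallsmile \alpha_{i+1} = 0$ for all $i$, the $n$-fold Massey product $\langle\alpha_1,\dots,\alpha_n\rangle$ contains $0$ (using Proposition~\ref{prop:trivial strong}, it suffices to treat non-trivial sequences). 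Because $\rmH^2(G,\F_p) \cong \F_p$ is one-dimensional and the cup-product pairing is non-degenerate, the condition $\alpha_i \smallsmile \alpha_{i+1} = 0$ is very restrictive: for a fixed non-zero $\alpha_i$, its cup-annihilator is a hyperplane, so consecutive $\alpha_i$'s are forced to lie in prescribed hyperplanes, and I expect this forces (after a change of generators) all the $\alpha_i$ to be scalar multiples of a single class $\alpha$ — whence Theorem~\ref{thm:kummer Massey cyc} together with Proposition~\ref{prop:Massey cup}--(ii) gives vanishing, exactly as in Case~(b) of the proof of Theorem~\ref{thm:semidirectprod}. The subtlety is that when $n$ is small (especially $n = 3$) two linearly independent classes can both be cup-annihilated by a common third, so one must handle the genuinely ``non-cyclic'' configurations separately, presumably by constructing an explicit homomorphism $G \to \dbU_{n+1}$ from the one-relator presentation \eqref{eq:pres Demushkin} using Proposition~\ref{prop:commutators} to correct the single relator.

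For the free-product step (4), the Mayer--Vietoris / Künneth-type description of the cohomology of a free pro-$p$ product gives $\rmH^1(G_1 \ast G_2,\F_p) = \rmH^1(G_1,\F_p) \oplus \rmH^1(G_2,\F_p)$ and, crucially, $\rmH^2(G_1 \ast G_2,\F_p) = \rmH^2(G_1,\F_p) \oplus \rmH^2(G_2,\F_p)$ with all ``cross'' cup-products $\rmH^1(G_1) \smallsmile \rmH^1(G_2)$ vanishing. So, writing $\alpha_i = \alpha_i' + \alpha_i''$ with $\alpha_i' \in \rmH^1(G_1,\F_p)$, $\alpha_i'' \in \rmH^1(G_2,\F_p)$, the condition $\alpha_i \smallsmile \alpha_{i+1} = 0$ splits as $\alpha_i' \smallsmile \alpha_{i+1}' = 0$ and $\alpha_i'' \smallsmile \alpha_{i+1}'' = 0$. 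The strategy is to restrict a putative defining set to each factor: by the inductive hypothesis applied to $G_1$ and $G_2$, both $\langle\alpha_1',\dots,\alpha_n'\rangle$ and $\langle\alpha_1'',\dots,\alpha_n''\rangle$ contain $0$, yielding homomorphisms $\rho_1 \colon G_1 \to \dbU_{n+1}$ and $\rho_2 \colon G_2 \to \dbU_{n+1}$ with $(\rho_j)_{i,i+1}$ the appropriate restriction; then the universal property of the free pro-$p$ product glues $\rho_1,\rho_2$ into $\rho \colon G \to \dbU_{n+1}$, and Proposition~\ref{prop:masse unip}--(ii) finishes. The hard part here — and the one I would expect to demand the most care — is verifying that the glued $\rho$ genuinely has $\rho_{i,i+1} = \alpha_i$ in $\rmH^1(G,\F_p)$ and that the diagonal ``obstruction'' entries from the two factors do not interfere (they sit in $\rmH^2$ of the respective factors, which are complementary summands, so this should work, but it needs to be checked carefully, possibly by first passing to $\bar\dbU_{n+1}$ via Proposition~\ref{prop:masse unip}--(i) to see the Massey product is \emph{defined}, then separately arranging vanishing). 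I also need Proposition~\ref{prop:trivial strong} at the start of each case to reduce to sequences of non-trivial classes, and Remark~\ref{rem:subgroups et} implicitly to keep the orientations torsion-free under restriction. The overall obstacle, then, is the free-product gluing lemma for Massey-vanishing — this is the genuinely new combinatorial input, whereas the semidirect-product case and the cyclic-Massey vanishing are already packaged as Theorems~\ref{thm:semidirectprod} and~\ref{thm:kummer Massey cyc}.
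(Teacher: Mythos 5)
Your overall architecture --- structural induction over the four clauses of Definition~\ref{defin:ET}, with Theorem~\ref{thm:semidirectprod}--(ii) plus Example~\ref{ex:kummer}--(d) handling the semidirect-product step, triviality of $\rmH^2$ handling the free case, and a gluing of homomorphisms $G_j\to\dbU_{n+1}$ via the universal property handling the free-product step --- is exactly the paper's. Your free-product argument is in fact complete as sketched and is essentially the content of the results the paper cites for that step (\cite[Prop.~4.8]{JT:U4}, \cite[Rem.~5.2]{BCQ:AbsGalType}); your worry about ``diagonal obstruction entries'' is unfounded, since once you hold genuine homomorphisms $\rho_j\colon G_j\to\dbU_{n+1}$ (not merely to $\bar\dbU_{n+1}$) there is no residual obstruction to track, and $\rho_{i,i+1}=\alpha_i$ follows from the decomposition $\rmH^1(G_1\ast G_2,\F_p)=\rmH^1(G_1,\F_p)\oplus\rmH^1(G_2,\F_p)$ by restriction.

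The genuine gap is the Demushkin base case. Your proposed reduction --- that non-degeneracy of the pairing $\rmH^1\times\rmH^1\to\rmH^2\cong\F_p$ together with $\alpha_i\smallsmile\alpha_{i+1}=0$ forces all the $\alpha_i$ to be proportional, whence Theorem~\ref{thm:kummer Massey cyc} applies --- fails as soon as the rank $d=\dim\rmH^1(G,\F_p)$ is at least $4$. The cup-annihilator of a non-zero class is a hyperplane of dimension $d-1$, and for $p$ odd the pairing is a non-degenerate alternating form, so there exist totally isotropic subspaces of dimension $d/2\geq 2$; any sequence drawn from such a subspace satisfies \eqref{eq:cup 0} while consisting of pairwise non-proportional classes, for every $n$. (The difficulty is thus governed by the rank $d$, not by $n$ being small, as you suggest.) Your fallback --- ``constructing an explicit homomorphism $G\to\dbU_{n+1}$ from the one-relator presentation \eqref{eq:pres Demushkin} using Proposition~\ref{prop:commutators} to correct the single relator'' --- is precisely where the real work lies, and it is not carried out: Proposition~\ref{prop:commutators} requires all superdiagonal entries $a_i$ of the matrix $A$ to be non-zero, which corresponds to the cyclic situation, and the general case needs a genuinely different lifting argument. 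The paper does not reprove this; it invokes the theorem of P\'al and Szab\'o \cite[Thm.~3.5]{pal:Massey} (see also \cite[Prop.~4.1]{JT:U4}) as a black box. As written, your proof of the Demushkin case is therefore incomplete.
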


 \begin{proof}
  We proceed following the inductive construction of the oriented pro-$p$ group of elementary type.
  
  If $G$ is a free pro-$p$ group, then it is straightforward to see that $G$ satisfies the strong $n$-Massey vanishing property for every $n\geq0$ by Proposition~\ref{prop:masse unip}--(ii) (cf., e.g., \cite[Ex.~4.1]{mt:Massey}).

  If $G$ is a Demushkin group, then $G$ satisfies the strong $n$-Massey vanishing property as shown by A.~P\'al and E.~Szab\'o in \cite[Thm.~3.5]{pal:Massey} (see also \cite[Prop.~4.1]{JT:U4}).
 
 If $(G_1,\theta_1)$ and $(G_2,\theta_2)$ are two oriented pro-$p$ groups such that both $G_1$ and $G_2$ satisfy the strong $n$-Massey vanishing property, then also the free pro-$p$ product $G_1\ast G_2$ satisfies the strong $n$-Massey vanishing property (cf. \cite[Prop.~4.8]{JT:U4} and \cite[Rem.~5.2]{BCQ:AbsGalType}).
 
 Finally, suppose that $(G_0,\theta)$ is an oriented pro-$p$ group of elementary type with $G_0$ satisfying the strong $n$-Massey vanishing property for every $n>2$, and consider the semidirect product $$(G,\tilde\theta)=(\Z_p\rtimes_{\theta} G_0,\tilde\theta).$$
Since $\theta$ is a torsion-free orientation, $(G,\tilde\theta)$ is Kummerian by Example~\ref{ex:kummer}--(d), and thus also $\Z_p\rtimes_{\theta} G_0$ satisfies the strong $n$-Massey vanishing property for every $n>2$ by Theorem~\ref{thm:semidirectprod}.
 \end{proof}
 
Items~(a)--(c) of Corollary~\ref{cor:fields} follow from Proposition~\ref{prop:etc fields} and Theorem~\ref{thm:Massey ETC}, 
and Items~(d)--(e) of Corollary~\ref{cor:fields} follow from Corollary~\ref{cor:semidirect}.  


\begin{rem}\label{rem:subgroups}\rm
Let $(G,\theta)$ be an oriented pro-$p$ group of elementary type such that $\theta$ is a torsion-free orientation.
Since $(H,\theta\vert_H)$ is again an oriented pro-$p$ group of elementary type for every finitely generated subgroup $H\subseteq G$ (cf. Remark~\ref{rem:subgroups et}), Theorem~\ref{thm:Massey ETC} implies that every finitely generated subgroup of $G$ --- in particular, every {\sl open} subgroup (as an open subgroup of a finitely generated pro-$p$ group is again finitely generated; cf., e.g., \cite[Prop.~1.7]{ddsms}) --- satisfies the strong $n$-Massey vanishing property for every $n>2$.
\end{rem}

\begin{bibdiv}
\begin{biblist}

\bib{matrix1}{article}{
   author={Bier, A.},
   author={Ho\l ubowski, W.},
   title={A note on commutators in the group of infinite triangular matrices
   over a ring},
   journal={Linear Multilinear Algebra},
   volume={63},
   date={2015},
   number={11},
   pages={2301--2310},
}

\bib{BCQ:AbsGalType}{article}{
   author={Blumer, S.},
   author={Cassella, A.},
   author={Quadrelli, C.},
   title={Groups of $p$-absolute Galois type that are not absolute Galois
   groups},
   journal={J. Pure Appl. Algebra},
   volume={227},
   date={2023},
   number={4},
   pages={Paper No. 107262},
}

   
\bib{cem}{article}{
   author={Chebolu, S.K.},
   author={Efrat, I.},
   author={Mina\v{c}, J.},
   title={Quotients of absolute Galois groups which determine the entire
   Galois cohomology},
   journal={Math. Ann.},
   volume={352},
   date={2012},
   number={1},
   pages={205--221},
   issn={0025-5831},
}

\bib{cmq:fast}{article}{
   author={Chebolu, S.K.},
   author={Mina\v{c}, J.},
   author={Quadrelli, C.},
   title={Detecting fast solvability of equations via small powerful Galois groups},
   journal={Trans. Amer. Math. Soc.},
   volume={367},
   number={21},
   date={2015},
   pages={8439--8464},
}

\bib{ddsms}{book}{
   author={Dixon, J.D.},
   author={du Sautoy, M.P.F.},
   author={Mann, A.},
   author={Segal, D.},
   title={Analytic pro-$p$ groups},
   series={Cambridge Studies in Advanced Mathematics},
   volume={61},
   edition={2},
   publisher={Cambridge University Press, Cambridge},
   date={1999},
   pages={xviii+368},
   isbn={0-521-65011-9},
}

\bib{dwyer}{article}{
   author={Dwyer, W.G.},
   title={Homology, Massey products and maps between groups},
   journal={J. Pure Appl. Algebra},
   volume={6},
   date={1975},
   number={2},
   pages={177--190},
   issn={0022-4049},
}

\bib{ido:etc}{article}{
   author={Efrat, I.},
   title={Orderings, valuations, and free products of Galois groups},
   journal={Sem. Structure Alg\'ebriques Ordonn\'ees, Univ. Paris VII},
   date={1995},
}

\bib{ido:etc2}{article}{
   author={Efrat, I.},
   title={Pro-$p$ Galois groups of algebraic extensions of $\mathbf{Q}$},
   journal={J. Number Theory},
   volume={64},
   date={1997},
   number={1},
   pages={84--99},
}

\bib{efrat:small}{article}{
   author={Efrat, I.},
   title={Small maximal pro-$p$ Galois groups},
   journal={Manuscripta Math.},
   volume={95},
   date={1998},
   number={2},
   pages={237--249},
   issn={0025-2611},
}

\bib{henselian}{article}{
   author={Efrat, I.},
   title={Finitely generated pro-$p$ Galois groups of $p$-Henselian fields},
   journal={J. Pure Appl. Algebra},
   volume={138},
   date={1999},
   number={3},
   pages={215--228},
}

\bib{efrat:funfield}{article}{
   author={Efrat, I.},
   title={Pro-$p$ Galois groups of function fields over local fields},
   journal={Comm. Algebra},
   volume={28},
   date={2000},
   number={6},
   pages={2999--3021},
   issn={0092-7872},}

\bib{ido:Hasse}{article}{
   author={Efrat, I.},
   title={A Hasse principle for function fields over PAC fields},
   journal={Israel J. Math.},
   volume={122},
   date={2001},
   pages={43--60},
   issn={0021-2172},
}

\bib{ido:Massey}{article}{
   author={Efrat, I.},
   title={The Zassenhaus filtration, Massey products, and homomorphisms of
   profinite groups},
   journal={Adv. Math.},
   volume={263},
   date={2014},
   pages={389--411},
}

\bib{idomatrix}{article}{
   author={Efrat, I.},
   title={The lower $p$-central series of a free profinite group and the
   shuffle algebra},
   journal={J. Pure Appl. Algebra},
   volume={224},
   date={2020},
   number={6},
   pages={106260, 13},
   issn={0022-4049},
}

\bib{EM:Massey}{article}{
   author={Efrat, I.},
   author={Matzri, E.},
   title={Triple Massey products and absolute Galois groups},
   journal={J. Eur. Math. Soc. (JEMS)},
   volume={19},
   date={2017},
   number={12},
   pages={3629--3640},
   issn={1435-9855},
}

	\bib{idojan:tams}{article}{
   author={Efrat, I.},
   author={Mina\v{c}, J.},
   title={Galois groups and cohomological functors},
   journal={Trans. Amer. Math. Soc.},
   volume={369},
   date={2017},
   number={4},
   pages={2697--2720},
   issn={0002-9947},
}

\bib{eq:kummer}{article}{
   author={Efrat, I.},
   author={Quadrelli, C.},
   title={The Kummerian property and maximal pro-$p$ Galois groups},
   journal={J. Algebra},
   volume={525},
   date={2019},
   pages={284--310},
   issn={0021-8693},
}

\bib{EK}{article}{
   author={Engler, A.J.},
   author={Koenigsmann, J.},
   title={Abelian subgroups of pro-$p$ Galois groups},
   journal={Trans. Amer. Math. Soc.},
   volume={350},
   date={1998},
   number={6},
   pages={2473--2485},
   issn={0002-9947},
}

\bib{friedjarden}{book}{
   author={Fried, M. D.},
   author={Jarden, M.},
   title={Field arithmetic},
   series={Ergebnisse der Mathematik und ihrer Grenzgebiete. 3. Folge. A
   Series of Modern Surveys in Mathematics },
   volume={11},
   edition={3},
   note={Revised by Jarden},
   publisher={Springer-Verlag, Berlin},
  date={2008},
   pages={xxiv+792},
}

\bib{jochen}{article}{
   author={G\"{a}rtner, J.},
   title={Higher Massey products in the cohomology of mild pro-$p$-groups},
   journal={J. Algebra},
   volume={422},
   date={2015},
   pages={788--820},
   issn={0021-8693},
}

\bib{WDG}{article}{
   author={Geyer, W.-D.},
   title={Field theory},
   conference={
      title={Travaux math\'{e}matiques. Vol. XXII},
   },
   book={
      series={Trav. Math.},
      volume={22},
      publisher={Fac. Sci. Technol. Commun. Univ. Luxemb., Luxembourg},
   },
   date={2013},
   pages={5--177},
}

\bib{GPM}{article}{
   author={Guillot, P.},
   author={Mina\v{c}, J.},
   author={Topaz, A.},
   title={Four-fold Massey products in Galois cohomology},
   note={With an appendix by O.~Wittenberg},
   journal={Compos. Math.},
   volume={154},
   date={2018},
   number={9},
   pages={1921--1959},
}

	\bib{PJ}{article}{
   author={Guillot, P.},
   author={Mina\v{c}, J.},
   title={Extensions of unipotent groups, Massey products and Galois theory},
   journal={Adv. Math.},
   volume={354},
   date={2019},
   pages={article no. 106748},
}
	
	\bib{HW:book}{book}{
   author={Haesemeyer, C.},
   author={Weibel, Ch.},
   title={The norm residue theorem in motivic cohomology},
   series={Annals of Mathematics Studies},
   volume={200},
   publisher={Princeton University Press, Princeton, NJ},
   date={2019},
}

\bib{HW:Massey}{article}{
   author={Harpaz, Y.},
   author={Wittenberg, O.},
   title={The Massey vanishing conjecture for number fields},
   journal={Duke Math. J.},
   volume={172},
   date={2023},
   number={1},
   pages={1--41},
}

\bib{hopwick}{article}{
   author={Hopkins, M.J.},
   author={Wickelgren, K.G.},
   title={Splitting varieties for triple Massey products},
   journal={J. Pure Appl. Algebra},
   volume={219},
   date={2015},
   number={5},
   pages={1304--1319},
}

\bib{jacobware}{article}{
   author={Jacob, B.},
   author={Ware, R.},
   title={A recursive description of the maximal pro-$2$ Galois group via
   Witt rings},
   journal={Math. Z.},
   volume={200},
   date={1989},
   number={3},
   pages={379--396},
   issn={0025-5874},}
   
\bib{Koenigsmann}{article}{
   author={Koenigsmann, J.},
   title={Pro-$p$ Galois groups of rank $\leq 4$},
   journal={Manuscripta Math.},
   volume={95},
   date={1998},
   number={2},
   pages={251--271},
   issn={0025-2611},}

	\bib{kraines}{article}{
   author={Kraines, D.},
   title={Massey higher products},
   journal={Trans. Amer. Math. Soc.},
   volume={124},
   date={1966},
   pages={431--449},
   issn={0002-9947},
}
	
\bib{labute}{article}{
   author={Labute, J.P.},
   title={Classification of Demushkin groups},
   journal={Canadian J. Math.},
   volume={19},
   date={1967},
   pages={106--132},
   issn={0008-414X},
}

\bib{LLSWW}{article}{
   author={Lam, Y.H.J.},
   author={Liu, Y.},
   author={Sharifi, R.T.},
   author={Wake, P.},
   author={Wang, J.},
   title={Generalized Bockstein maps and Massey products},
   date={2023},
   journal={Forum Math. Sigma},
   volume={11},
   date={2023},
   pages={Paper No. e5},
}

\bib{marshall}{article}{
   author={Marshall, M.},
   title={The elementary type conjecture in quadratic form theory},
   conference={
      title={Algebraic and arithmetic theory of quadratic forms},
   },
   book={
      series={Contemp. Math.},
      volume={344},
      publisher={Amer. Math. Soc., Providence, RI},
   },
   date={2004},
   pages={275--293},}

\bib{eli:Massey}{unpublished}{
   author={Matzri, E.},
   title={Triple Massey products in Galois cohomology},
   date={2014},
   note={Preprint, available at {\tt arXiv:1411.4146}},
}

\bib{eli2}{article}{
   author={Matzri, E.},
   title={Triple Massey products of weight $(1,n,1)$ in Galois cohomology},
   journal={J. Algebra},
   volume={499},
   date={2018},
   pages={272--280},
}
\bib{eli3}{article}{
   author={Matzri, E.},
   title={Higher triple Massey products and symbols},
   journal={J. Algebra},
   volume={527},
   date={2019},
   pages={136--146},
   issn={0021-8693},}

\bib{MerSca1}{unpublished}{
   author={Merkurjev, A.},
   author={Scavia, F.},
   title={Degenerate fourfold Massey products over arbitrary fields},
   date={2022},
   note={Preprint, available at {\tt arXiv:2208.13011}},
}

\bib{MerSca2}{unpublished}{
   author={Merkurjev, A.},
   author={Scavia, F.},
   title={The Massey Vanishing Conjecture for fourfold Massey products modulo 2},
   date={2023},
   note={Preprint, available at {\tt arXiv:2301.09290}},
}

\bib{MerSca3}{unpublished}{
   author={Merkurjev, A.},
   author={Scavia, F.},
   title={On the Massey Vanishing Conjecture and Formal Hilbert 90},
   date={2023},
   note={Preprint, available at {\tt arXiv:2308.13682}},
}

		
\bib{MPQT}{article}{
   author={Mina\v{c}, J.},
   author={Pasini, F.W.},
   author={Quadrelli, C.},
   author={T\^{a}n, N.D.},
   title={Koszul algebras and quadratic duals in Galois cohomology},
   journal={Adv. Math.},
   volume={380},
   date={2021},
   pages={article no. 107569},
}

\bib{birs}{report}{
   author={Mina\v{c}, J.},
   author={Pop, F.},
   author={Topaz, A.},
   author={Wickelgren, K.},
   title={Nilpotent Fundamental Groups},
   date={2017},
   note={Report of the workshop ``Nilpotent Fundamental Groups'', Banff AB, Canada, June 2017},
   eprint={https://www.birs.ca/workshops/2017/17w5112/report17w5112.pdf},
   organization={BIRS for Mathematical Innovation and Discovery},
   conference={
      title={Nilpotent Fundamental Groups 17w5112},
      address={Banff AB, Canada},
      date={June 2017}},
}

   \bib{mt:conj}{article}{
   author={Mina\v{c}, J.},
   author={T\^{a}n, N.D.},
   title={The kernel unipotent conjecture and the vanishing of Massey
   products for odd rigid fields},
   journal={Adv. Math.},
   volume={273},
   date={2015},
   pages={242--270},
}


\bib{MT:Masseyall}{article}{
   author={Mina\v{c}, J.},
   author={T\^{a}n, N.D.},
   title={Triple Massey products vanish over all fields},
   journal={J. London Math. Soc.},
   volume={94},
   date={2016},
   pages={909--932},
}

   \bib{JT:U4}{article}{
   author={Mina\v{c}, J.},
   author={T\^{a}n, N.D.},
   title={Counting Galois $\dbU_4(\F_p)$-extensions using Massey products},
   journal={J. Number Theory},
   volume={176},
   date={2017},
   pages={76--112},
   issn={0022-314X},
}

\bib{mt:Massey}{article}{
   author={Mina\v{c}, J.},
   author={T\^{a}n, N.D.},
   title={Triple Massey products and Galois theory},
   journal={J. Eur. Math. Soc. (JEMS)},
   volume={19},
   date={2017},
   number={1},
   pages={255--284},
   issn={1435-9855},
}

\bib{nsw:cohn}{book}{
   author={Neukirch, J.},
   author={Schmidt, A.},
   author={Wingberg, K.},
   title={Cohomology of number fields},
   series={Grundlehren der Mathematischen Wissenschaften},
   volume={323},
   edition={2},
   publisher={Springer-Verlag, Berlin},
   date={2008},
   pages={xvi+825},
   isbn={978-3-540-37888-4},}

\bib{palquick:Massey}{unpublished}{
   author={P\'al, A.},
   author={Quick, G.},
   title={Real projective groups are formal},
   date={2022},
   note={Preprint, available at {\tt arXiv:2206.14645}},
}
  
\bib{pal:Massey}{unpublished}{
   author={P\'al, A.},
   author={Szab\'o, E.},
   title={The strong Massey vanishing conjecture for fields with virtual cohomological dimension at most 1},
   date={2020},
   note={Preprint, available at {\tt arXiv:1811.06192}},
}
  

\bib{cq:bk}{article}{
   author={Quadrelli, C.},
   title={Bloch-Kato pro-$p$ groups and locally powerful groups},
   journal={Forum Math.},
   volume={26},
   date={2014},
   number={3},
   pages={793--814},
   issn={0933-7741},
}

  
\bib{qw:cyc}{article}{
   author={Quadrelli, C.},
   author={Weigel, Th.S.},
   title={Profinite groups with a cyclotomic $p$-orientation},
   date={2020},
   volume={25},
   journal={Doc. Math.},
   pages={1881--1916}
   }

\bib{qw:bogo}{article}{
   author={Quadrelli, C.},
   author={Weigel, Th.S.},
   title={Oriented pro-$\ell$ groups with the Bogomolov-Positselski property},
   date={2022},
   volume={8},
   journal={Res. Number Theory},
   number={2},
   pages={article no. 21},
   }

\bib{rost}{article}{
   author={Rost, M.},
   title={Norm varieties and algebraic cobordism},
   conference={
      title={Proceedings of the International Congress of Mathematicians,
      Vol. II},
      address={Beijing},
      date={2002},
   },
   book={
      publisher={Higher Ed. Press, Beijing},
   },
   date={2002},
   pages={77--85},}
   
   \bib{serre:Demushkin}{article}{
   author={Serre, J-P.},
   title={Structure de certains pro-$p$-groupes (d'apr\`es Demu\v{s}kin)},
   language={French},
   conference={
      title={S\'{e}minaire Bourbaki, Vol. 8},
   },
   book={
      publisher={Soc. Math. France, Paris},
   },
   date={1995},
   pages={Exp. No. 252, 145--155},
}
		
\bib{serre:galc}{book}{
   author={Serre, J-P.},
   title={Galois cohomology},
   series={Springer Monographs in Mathematics},
   edition={Corrected reprint of the 1997 English edition},
   note={Translated from the French by Patrick Ion and revised by the
   author},
   publisher={Springer-Verlag, Berlin},
   date={2002},
   pages={x+210},
   isbn={3-540-42192-0},}

\bib{sz:raags}{article}{
	author={Snopce, I.},
	author={Zalesski\u{\i}, P.A.},
	title={Right-angled Artin pro-$p$ groups},
	date={2022},
	journal={Bull. Lond. Math. Soc.},
	volume={54},
	pages={1904-1922},
	number={5},
}

\bib{voev}{article}{
   author={Voevodsky, V.},
   title={On motivic cohomology with $\mathbf{Z}/l$-coefficients},
   journal={Ann. of Math. (2)},
   volume={174},
   date={2011},
   number={1},
   pages={401--438},
   issn={0003-486X},
   }

   \bib{vogel}{report}{
   author={Vogel, D.},
   title={Massey products in the Galois cohomology of number fields},
   date={2004},
   note={PhD thesis, University of Heidelberg},
   eprint={http://www.ub.uni-heidelberg.de/archiv/4418},
}
	
%

\bib{wadsworth}{article}{
   author={Wadsworth, A.},
   title={$p$-Henselian field: $K$-theory, Galois cohomology, and graded
   Witt rings},
   journal={Pacific J. Math.},
   volume={105},
   date={1983},
   number={2},
   pages={473--496},
}

\bib{ware}{article}{
   author={Ware, R.},
   title={Galois groups of maximal $p$-extensions},
   journal={Trans. Amer. Math. Soc.},
   volume={333},
   date={1992},
   number={2},
   pages={721--728},
}

\bib{weibel}{article}{
   author={Weibel, Ch.},
   title={The norm residue isomorphism theorem},
   journal={J. Topol.},
   volume={2},
   date={2009},
   number={2},
   pages={346--372},
   issn={1753-8416},
}

\bib{wick}{article}{
   author={Wickelgren, K.},
   title={Massey products $\langle y$, $x$, $x$, $\ldots$, $x$, $x$,
   $y\rangle$ in Galois cohomology via rational points},
   journal={J. Pure Appl. Algebra},
   volume={221},
   date={2017},
   number={7},
   pages={1845--1866},
   issn={0022-4049},
}

\end{biblist}
\end{bibdiv}
\end{document}